\newtheorem{theorem}{Theorem}
\newtheorem{corollary}[theorem]{Corollary}
\newtheorem{definition}[theorem]{Definition}
\newtheorem{example}[theorem]{Example}
\newtheorem{lemma}[theorem]{Lemma}
\newtheorem{proposition}[theorem]{Proposition}
\newtheorem{remark}[theorem]{Remark}
\newenvironment{proof}[1][Proof]{\noindent\textbf{#1.} }{\ \rule{0.5em}{0.5em}}
\begin{document}

\title{Some Singular Vector-valued Jack and Macdonald Polynomials}
\author{Charles F. Dunkl\thanks{Dept. of Mathematics, University of Virginia,
Charlottesville VA 22904-4137; email: cfd5z@virginia.edu}}
\maketitle

\begin{abstract}
For each partition $\tau$ of $N$ there are irreducible modules of the
symmetric groups $\mathcal{S}_{N}$ or the corresponding Hecke algebra
$\mathcal{H}_{N}\left(  t\right)  $ whose bases consist of reverse standard
Young tableaux of shape $\tau$. There are associated spaces of nonsymmetric
Jack and Macdonald polynomials taking values in these modules,
respectively.The Jack polynomials are a special case of those constructed by
Griffeth for the infinite family $G\left(  n,p,N\right)  $ of complex
reflection groups. The Macdonald polynomials were constructed by Luque and the
author. For both the group $\mathcal{S}_{N}$ and the Hecke algebra
$\mathcal{H}_{N}\left(  t\right)  $ there is a commutative set of Dunkl
operators. The Jack and the Macdonald polynomials are parametrized by $\kappa$
and $\left(  q,t\right)  $ respectively. For certain values of the parameters
(called singular values) there are polynomials annihilated by each Dunkl
operator; these are called singular polynomials. This paper analyzes the
singular polynomials whose leading term is $x_{1}^{m}\otimes S$, where $S$ is
an arbitrary reverse standard Young tableau of shape $\tau$. The singular
values depend on properties of the edge of the Ferrers diagram of $\tau$.

\end{abstract}

\section{Introduction}

For each partition $\tau$ of $N$ there are irreducible modules of the
symmetric groups $\mathcal{S}_{N}$ and the corresponding Hecke algebra
$\mathcal{H}_{N}\left(  t\right)  $, whose bases consist of reverse standard
Young tableaux of shape $\tau$. There are associated spaces of nonsymmetric
Jack and Macdonald polynomials taking values in these modules, respectively.
(In what follows the polynomials are always of the nonsymmetric type.) The
Jack polynomials are a special case of those constructed by Griffeth
\cite{G2010} for the infinite family $G\left(  n,p,N\right)  $ of complex
reflection groups. The Macdonald polynomials were constructed by Luque and the
author \cite{DL2012}. The polynomials are the simultaneous eigenfunctions of
the Cherednik operators, which form a commutative set. For both the group
$\mathcal{S}_{N}$ and the Hecke algebra $\mathcal{H}_{N}\left(  t\right)  $
there is a commutative set of Dunkl operators, which lower the degree of a
homogeneous polynomial by $1$.

The Jack and the Macdonald polynomials are parametrized by $\kappa$ and
$\left(  q,t\right)  $ respectively. For certain values of the parameters
(called singular values) there are polynomials annihilated by each Dunkl
operator; these are called singular polynomials. The structure of the singular
polynomials for the trivial module corresponding to the partition $\left(
N\right)  $, that is the ordinary scalar polynomials, is more or less well
understood by now. For the modules of dimension $\geq2$ the singular
polynomials are mostly a mystery. In \cite{Dunkl2018} and \cite{Dunkl2019} we
constructed special singular polynomials which correspond to the minimum
parameter values. To be specific denote the longest hook-length in the Ferrers
diagram of $\tau$ by $h_{\tau}$ then any other singular value $\kappa$
satisfies $\left\vert \kappa\right\vert \geq\frac{1}{h}$ and if a pair
$\left(  q,t\right)  $ such that $q^{m}t^{n}=1$ provides a singular polynomial
then $\left\vert \frac{m}{n}\right\vert \geq\frac{1}{h}$. The main topic of
this paper is the determination of all the singular values for which the Jack
or Macdonald polynomials with leading term $x_{1}^{m}\otimes S$ are singular,
where $S$ is an arbitrary reverse standard Young tableau of shape $\tau$. The
singular values depend on properties of the edge of the Ferrers diagram of
$\tau$.

There is a brief outline of the needed aspects of the representation theory of
$\mathcal{S}_{N}$ and $\mathcal{H}_{N}\left(  t\right)  $ in Section
\ref{RepTh}, focussing on the action of the generators on the basis elements.
The important operators on scalar and vector-valued polynomials are defined in
Section \ref{PolyRep}. Subsection \ref{JPols} deals with the Cherednik-Dunkl
and Dunkl operators on the vector-valued polynomials, introduces the Jack
polynomials, and the key formulas for the action of Dunkl operators, in
particular, when specialized to the polynomials with leading term $x_{1}%
^{m}\otimes S$. Subsection \ref{Mpols} contains the analogous results on
Macdonald polynomials. Section \ref{SingPol} combines the previous results
with analyses of the spectral vectors and a combinatorial analysis of the
possible singular values, to prove our main results on Jack and Macdonald
polynomials. Subsection \ref{Isotyp} illustrates the representation-theoretic
aspect of singular polynomials.

\section{Representation Theory\label{RepTh}}

The symmetric group $\mathcal{S}_{N}$ is the group of permutations of
$\left\{  1,2,\ldots,N\right\}  $. The transpositions $w=\left(  i,j\right)
$, defined by $w\left(  i\right)  =j,w\left(  j\right)  =i$ and $w\left(
k\right)  =k$ for $k\neq i,j$ are fundamental tools in this study. The simple
reflections $s_{i}:=\left(  i,i+1\right)  ,1\leq i<N$, generate $\mathcal{S}%
_{N}$ ; and the group is abstractly presented by $\left\{  s_{i}^{2}=1:1\leq
i<N\right\}  $ and the \textit{braid relations:}%
\begin{align*}
s_{i}s_{i+1}s_{i}  &  =s_{i+1}s_{i}s_{i+1},1\leq i\leq N-2,\\
s_{i}s_{j}  &  =s_{j}s_{i},1\leq i<j-1\leq N-2.
\end{align*}

The group algebra $\mathbb{C}\mathcal{S}_{N}$, namely the linear space
$\left\{  \sum_{w\in\mathcal{S}_{N}}c_{w}w\right\}  $, is of dimension $N!$.
The associated Hecke algebra $\mathcal{H}_{N}\left(  t\right)  $ where $t$ is
transcendental (formal parameter) or a complex number not a root of unity, is
the associative algebra generated by $\left\{  T_{1},T_{2},\ldots
,T_{N-1}\right\}  $ subject to the relations%
\begin{gather*}
\left(  T_{i}+1\right)  \left(  T_{i}-t\right)  =0,\\
T_{i}T_{i+1}T_{i}=T_{i+1}T_{i}T_{i+1},1\leq i\leq N-2,\\
T_{i}T_{j}=T_{j}T_{i},1\leq i<j-1\leq N-2.
\end{gather*}
It can be shown that there is a linear isomorphism between $\mathbb{C}%
\mathcal{S}_{N}$ and $\mathcal{H}_{N}\left(  t\right)  $ based on the map
$s_{i}\rightarrow T_{i}$. When $t=1$ they are identical.

The irreducible modules of these algebras correspond to partitions of $N$ and
are constructed in terms of Young tableaux. The descriptions will be given in
terms of the actions of $\left\{  s_{i}\right\}  $ or $\left\{  T_{i}\right\}
$ on the basis elements (see \cite{DJ1986}).

Let $\mathbb{N}_{0}:=\left\{  0,1,2,3,\ldots\right\}  $ and denote the set of
partitions\linebreak\ $\mathbb{N}_{0}^{N,+}:=\left\{  \lambda\in\mathbb{N}%
_{0}^{N}:\lambda_{1}\geq\lambda_{2}\geq\cdots\geq\lambda_{N}\right\}  $. Let
$\tau$ be a partition of $N$ that is $\tau\in\mathbb{N}_{0}^{N,+}$ and
$\left\vert \tau\right\vert =N$. Thus $\tau=\left(  \tau_{1},\tau_{2}%
,\ldots\right)  $ (often the trailing zero entries are dropped when writing
$\tau$). The length of $\tau$ is $\ell\left(  \tau\right)  :=\max\left\{
i:\tau_{i}>0\right\}  $. There is a Ferrers diagram of shape $\tau$ (given the
same label), with boxes at points $\left(  i,j\right)  $ with $1\leq i\leq
\ell\left(  \tau\right)  $ and $1\leq j\leq\tau_{i}$. A \textit{tableau} of
shape $\tau$ is a filling of the boxes with numbers, and a \textit{reverse
standard Young tableau} (RSYT) is a filling with the numbers $\left\{
1,2,\ldots,N\right\}  $ so that the entries decrease in each row and each
column. Denote the set of RSYT's of shape $\tau$ by $\mathcal{Y}\left(
\tau\right)  $ and let $V_{\tau}=\mathrm{span}_{\mathbb{F}}\left\{
S:S\in\mathcal{Y}\left(  \tau\right)  \right\}  $ with orthogonal basis
$\mathcal{Y}\left(  \tau\right)  $, (where $\mathbb{F}$ is some extension
field of $\mathbb{Q}$ containing the parameters $\kappa$ or $q,t$). The
dimension of $V_{\tau}$, that is $\#\mathcal{Y}\left(  \tau\right)  $, is
given by the well-known hook-length formula. For $1\leq i\leq N$ and
$S\in\mathcal{Y}\left(  \tau\right)  $ the entry $i$ is at coordinates
$\left(  \mathrm{\operatorname{row}}\left(  i,S\right)  ,\operatorname{col}%
\left(  i,S\right)  \right)  $ and the \textit{content} of the entry is
$c\left(  i,S\right)  =\operatorname{col}\left(  i,S\right)
-\mathrm{\operatorname{row}}\left(  i,S\right)  $. Each $S\in\mathcal{Y}%
\left(  \tau\right)  $ is uniquely determined by its \textit{content vector}
$\left[  c\left(  i,S\right)  \right]  _{i=1}^{N}$. For example let
$\tau=\left(  4,3\right)  $ and $S=$ $%
\begin{array}
[c]{cccc}%
7 & 6 & 5 & 2\\
4 & 3 & 1 &
\end{array}
$ then the content vector is $\left[  1,3,0,-1,2,1,0\right]  $. There are
representations of $\mathcal{S}_{N}$ and $\mathcal{H}_{N}\left(  t\right)  $
on $V_{\tau}$; each will be denoted by $\tau$. For each $i$ and $S$ (with
$1\leq i<N$ and $S\in\mathcal{Y}\left(  \tau\right)  $) there are four
different possibilities:

1) $\mathrm{\operatorname{row}}\left(  i,S\right)  =\mathrm{\operatorname{row}%
}\left(  i+1,S\right)  $ (implying $\operatorname{col}\left(  i,S\right)
=\operatorname{col}\left(  i+1,S\right)  +1$ and $c\left(  i,S\right)
-c\left(  i+1,S\right)  =1$) then%
\[
S\tau\left(  s_{i}\right)  =S,~S\tau\left(  T_{i}\right)  =tS;
\]

2) $\operatorname{col}\left(  i,S\right)  =\operatorname{col}\left(
i+1,S\right)  $ (implying $\mathrm{\operatorname{row}}\left(  i,S\right)
=\mathrm{\operatorname{row}}\left(  i+1,S\right)  +1$ and $c\left(
i,S\right)  -c\left(  i+1,S\right)  =-1$) then%
\[
S\tau\left(  s_{i}\right)  =-S,~S\tau\left(  T_{i}\right)  =-S;
\]

3) $\mathrm{\operatorname{row}}\left(  i,S\right)  <\mathrm{\operatorname{row}%
}\left(  i+1,S\right)  $ and $\operatorname{col}\left(  i,S\right)
>\operatorname{col}\left(  i+1,S\right)  $. In this case%
\[
c\left(  i,S\right)  -c\left(  i+1,S\right)  =\left(  \operatorname{col}%
\left(  i,S\right)  -\operatorname{col}\left(  i+1,S\right)  \right)  +\left(
\mathrm{\operatorname{row}}\left(  i+1,S\right)  -\mathrm{\operatorname{row}%
}\left(  i,S\right)  \right)  \geq2,
\]
and $S^{\left(  i\right)  }$, denoting the tableau obtained from $S$ by
exchanging $i$ and $i+1$, is an element of $\mathcal{Y}\left(  \tau\right)  $
and%
\begin{align*}
S\tau\left(  s_{i}\right)   &  =S^{\left(  i\right)  }+\frac{1}{c\left(
i,S\right)  -c\left(  i+1,S\right)  }S,\\
S\tau\left(  T_{i}\right)   &  =S^{\left(  i\right)  }+\dfrac{t-1}%
{1-t^{c\left(  i+1,S\right)  -c\left(  i,S\right)  }}S;
\end{align*}

4) $c\left(  i,S\right)  -c\left(  i+1,S\right)  \leq-2$, thus
$\mathrm{\operatorname{row}}\left(  i,S\right)  >\mathrm{\operatorname{row}%
}\left(  i+1,S\right)  $ and $\operatorname{col}\left(  i,S\right)
<\operatorname{col}\left(  i+1,S\right)  $ then with $b=c\left(  i,S\right)
-c\left(  i+1,S\right)  $,%
\begin{align*}
S\tau\left(  s_{i}\right)   &  =\left(  1-\frac{1}{b^{2}}\right)  S^{\left(
i\right)  }+\frac{1}{b}S,\\
S\tau\left(  T_{i}\right)   &  =\frac{t\left(  t^{b+1}-1\right)  \left(
t^{b-1}-1\right)  }{\left(  t^{b}-1\right)  ^{2}}S^{\left(  i\right)  }%
+\frac{t^{b}\left(  t-1\right)  }{t^{b}-1}S.
\end{align*}

The formulas in (4) are consequences of those in (3) by interchanging $S$ and
$S^{\left(  i\right)  }$ and applying the relations $\tau\left(  s_{i}\right)
^{2}=I$ and $\left(  \tau\left(  T_{i}\right)  +I\right)  \left(  \tau\left(
T_{i}\right)  -tI\right)  =0$ (where $I$ denotes the identity operator on
$V_{\tau}$).

There is a commutative set of Jucys-Murphy elements in both $\mathbb{Z}%
\emph{S}_{N}$ and $\mathcal{H}_{N}\left(  t\right)  $ and which are
diagonalized with respect to the basis $\mathcal{Y}\left(  \tau\right)  $
(with $1\leq i\leq N$ and $S\in\mathcal{Y}\left(  \tau\right)  $)%
\begin{align}
\omega_{i}  &  :=\sum_{j=i+1}^{N}\left(  i,j\right)  ,S\tau\left(  \omega
_{i}\right)  =c\left(  i,S\right)  S,\nonumber\\
\phi_{N}  &  =1,\phi_{i}=\frac{1}{t}T_{i}\phi_{i+1}T_{i},~S\tau\left(
\phi_{i}\right)  =t^{c\left(  i,S\right)  }S. \label{JMurph}%
\end{align}

The representation $\tau$ of $\mathcal{S}_{N}$ is unitary (orthogonal) when
$V_{\tau}$ is furnished with the inner product%
\[
\left\langle S,S^{\prime}\right\rangle _{0}:=\delta_{S,S^{\prime}{}^{\prime}%
}\times\prod_{\substack{1\leq i<j\leq N,\\c\left(  j,S\right)  -c\left(
i,S\right)  \geq2}}\left(  1-\frac{1}{\left(  c\left(  i,S\right)  -c\left(
j,S\right)  \right)  ^{2}}\right)  ,~S,S^{\prime}\in\mathcal{Y}\left(
\tau\right)  .
\]
The analogue for $\mathcal{H}_{N}\left(  t\right)  $ is
\[
\left\langle S,S^{\prime}\right\rangle _{0}:=\delta_{S,S^{\prime}{}^{\prime}%
}\times\prod_{\substack{1\leq i<j\leq N,\\c\left(  j,S\right)  -c\left(
i,S\right)  \geq2}}u\left(  t^{c\left(  i,S\right)  -c\left(  i,S\right)
}\right)  ,~S,S^{\prime}\in\mathcal{Y}\left(  \tau\right)  ,
\]
where%
\begin{equation}
u\left(  z\right)  :=\dfrac{\left(  t-z\right)  \left(  1-tz\right)  }{\left(
1-z\right)  ^{2}}. \label{uDef}%
\end{equation}
This form satisfies $\left\langle f\tau\left(  T_{i}\right)  ,g\right\rangle
_{0}=\left\langle f,g\tau\left(  T_{i}\right)  \right\rangle _{0}$ for $f,g\in
V_{\tau}$ and $1\leq i<N$.

\section{Representations and Operators on Polynomials\label{PolyRep}}

For $N\geq2,~x=\left(  x_{1},\ldots,x_{N}\right)  \in%
\mathbb{R}
^{N}$ . The cardinality of a set $E$ is denoted by $\#E$. For $\alpha
\in\mathbb{N}_{0}^{N}$ (a \textit{composition}) let $\left\vert \alpha
\right\vert :=\sum_{i=1}^{N}\alpha_{i}$, $x^{\alpha}:=\prod_{i=1}^{N}%
x_{i}^{\alpha_{i}}$, a monomial of degree $\left\vert \alpha\right\vert $. The
spaces of polynomials, respectively homogeneous, polynomials are
\begin{align*}
\mathcal{P} &  :=\mathrm{span}_{\mathbb{F}}\left\{  x^{\alpha}:\alpha
\in\mathbb{N}_{0}^{N}\right\}  ,\\
\mathcal{P}_{n} &  :=\mathrm{span}_{\mathbb{F}}\left\{  x^{\alpha}:\alpha
\in\mathbb{N}_{0}^{N},\left\vert \alpha\right\vert =n\right\}  ,~n\in%
\mathbb{N}
_{0}.
\end{align*}
For $\alpha\in\mathbb{N}_{0}^{N}$ let $\alpha^{+}$ denote the nonincreasing
rearrangement of $\alpha$. We use partial orders on $\mathbb{N}_{0}^{N}$ : for
$\alpha,\beta\in\mathbb{N}_{0}^{N}$, $\alpha\succ\beta$ ($\alpha$ dominates
$\beta$) means that $\alpha\neq\beta$ and $\sum_{i=1}^{j}\alpha_{i}\geq
\sum_{i=1}^{j}\beta_{i}$ for $1\leq j\leq N$; and $\alpha\vartriangleright
\beta$ means that $\left\vert \alpha\right\vert =\left\vert \beta\right\vert $
and either $\alpha^{+}\succ\beta^{+}$ or $\alpha^{+}=\beta^{+}$ and
$\alpha\succ\beta$. Also there is the rank function:%
\[
r_{\alpha}\left(  i\right)  :=\#\left\{  j:1\leq j\leq i,\alpha_{j}\geq
\alpha_{i}\right\}  +\#\left\{  j:i<j\leq N,\alpha_{j}>\alpha_{i}\right\}
,1\leq i\leq N;
\]
then $r_{\alpha}\in\mathcal{S}_{N}$ and $r_{\alpha}\left(  i\right)  =i$ for
all $i$ if and only if $\alpha=\alpha^{+}$.

The action of the symmetric group on polynomials is defined by%
\begin{align*}
xs_{i}  &  =\left(  x_{1}\ldots,\overset{i}{x}_{i+1},\overset{i+1}{x_{i}%
},\ldots,x_{N}\right) \\
p\left(  x\right)  s_{i}  &  =p\left(  xs_{i}\right)  ,1\leq i<N.
\end{align*}
For arbitrary transpositions $x\left(  i,j\right)  =\left(  \ldots,\overset
{i}{x}_{j},\ldots,\overset{j}{x}_{i},\ldots\right)  $ and $p\left(  x\right)
\left(  i,j\right)  =p\left(  x\left(  i,j\right)  \right)  .$There is a
subtlety (implicit inverse) involved due to acting on the right: for example
$p(x)s_{1}s_{2}=p\left(  xs_{1}\right)  s_{2}=p\left(  \left(  xs_{2}\right)
s_{1}\right)  $, that is, $p\left(  x_{1},x_{2},x_{3}\right)  s_{1}%
s_{2}=p\left(  x_{2},x_{1},x_{3}\right)  s_{2}=p\left(  x_{3},x_{1}%
,x_{2}\right)  $. In general $p\left(  x\right)  w=p\left(  xw^{-1}\right)  $
where $\left(  xw\right)  _{i}=x_{w^{-1}\left(  i\right)  }$ for all $i$.

The action of the Hecke algebra on polynomials is defined by%
\[
p\left(  x\right)  T_{i}=\left(  1-t\right)  x_{i+1}\frac{p\left(  x\right)
-p\left(  xs_{i}\right)  }{x_{i}-x_{i+1}}+tp\left(  xs_{i}\right)  .
\]
The defining relations can be verified straightforwardly. There are special
values: $x_{i}T_{i}=x_{i+1}$, $\left(  x_{i}+x_{i+1}\right)  T_{i}=t\left(
x_{i}+x_{i+1}\right)  $ and $\left(  tx_{i}-x_{i+1}\right)  T_{i}=-\left(
tx_{i}-x_{i+1}\right)  $. Also $pT_{i}=tp$ if and only if $ps_{i}=p$ , because
$tp-pT_{i}=\dfrac{tx_{i}-x_{i+1}}{x_{i}-x_{i+1}}\left(  p-ps_{i}\right)  $.

For a partition $\tau$ of $N$ let $\mathcal{P}_{\tau}:=\mathcal{P\otimes
}V_{\tau}$. The set $\left\{  x^{\alpha}\otimes S:\alpha\in\mathbb{N}_{0}%
^{N},S\in\mathcal{Y}\left(  \tau\right)  \right\}  $ is a basis of
$\mathcal{P}_{\tau}$. The representations of $\mathcal{S}_{N}$ and
$\mathcal{H}_{N}\left(  t\right)  $ on $\mathcal{P}_{\tau}$ are respectively
defined by the linear extension from the action on generators by%
\begin{align}
s_{i}  &  :p\left(  x\right)  \otimes S\rightarrow p\left(  xs_{i}\right)
\otimes S\tau\left(  s_{i}\right)  ,\nonumber\\
T_{i}  &  :p\left(  x\right)  \otimes S\rightarrow\left(  1-t\right)
x_{i+1}\frac{p\left(  x\right)  -p\left(  xs_{i}\right)  }{x_{i}-x_{i+1}%
}\otimes S+p\left(  xs_{i}\right)  \otimes S\tau\left(  T_{i}\right)  ,
\label{defTi}%
\end{align}
for $p\in\mathcal{P},S\in\mathcal{Y}\left(  \tau\right)  $ and $1\leq i<N$.
(For details and background for the vector-valued Macdonald polynomials see
\cite{DL2012}.)

\subsection{Jack polynomials\label{JPols}}

The Dunkl $\left\{  \mathcal{D}_{i}\right\}  $ and Cherednik-Dunkl $\left\{
\mathcal{U}_{i}\right\}  $ operators on $\mathcal{P}_{\tau}$ for
$p\in\mathcal{P},S\in\mathcal{Y}\left(  \tau\right)  $ and $1\leq i\leq N$,
are defined by%
\begin{align*}
\left(  p\left(  x\right)  \otimes S\right)  \mathcal{D}_{i} &  =\frac
{\partial}{\partial x_{i}}p\left(  x\right)  \otimes S+\kappa\sum_{j\neq
i,j=1}^{N}\frac{p\left(  x\right)  -p\left(  x\left(  i,j\right)  \right)
}{x_{i}-x_{j}}\otimes S\tau\left(  \left(  i,j\right)  \right)  ,\\
\left(  p\left(  x\right)  \otimes S\right)  \mathcal{U}_{i} &  =\left(
x_{i}p\left(  x\right)  \otimes S\right)  \mathcal{D}_{i}-\kappa\sum
_{j<i}p\left(  x\left(  i,j\right)  \right)  \otimes S\tau\left(  \left(
i,j\right)  \right)  .
\end{align*}
Each of the sets $\left\{  \mathcal{D}_{i}\right\}  $ and $\left\{
\mathcal{U}_{i}\right\}  $ consists of pairwise commuting elements. There is a
basis of $\mathcal{P}_{\tau}$ consisting of homogeneous polynomials each of
which is a simultaneous eigenfunction of $\left\{  \mathcal{U}_{i}\right\}  $;
these are the nonsymmetric Jack polynomials. For each $\left(  \alpha
,S\right)  \in\mathbb{N}_{0}^{N}\times\mathcal{Y}\left(  \tau\right)  $ there
is the polynomial%
\begin{equation}
J_{\alpha,S}=x^{\alpha}\otimes S\tau\left(  r_{\alpha}\right)  +\sum
_{\beta\vartriangleleft\alpha}x^{\beta}\otimes v_{\alpha,\beta,S}\left(
\kappa\right)  ,\label{Jxseries}%
\end{equation}
where $v_{\alpha,\beta,S}\left(  \kappa\right)  \in V_{\tau}$; these
coefficients are rational functions of $\kappa$. These polynomials satisfy%
\begin{align*}
J_{\alpha,S}\mathcal{U}_{i} &  =\zeta_{\alpha,S}\left(  i\right)  J_{\alpha
,S},\\
\zeta_{\alpha,S}\left(  i\right)   &  :=\alpha_{i}+1+\kappa c\left(
r_{\alpha}\left(  i\right)  ,S\right)  ,~1\leq i\leq N.
\end{align*}
The \textit{spectral vector} is $\left[  \zeta_{\alpha,S}\left(  i\right)
\right]  _{i=1}^{N}$. For detailed proofs see \cite{DL2011}.

We are concerned with the special case $\alpha=\left(  m,0,\ldots,0\right)
\in\mathbb{N}_{0}^{N}$. We apply formulas from \cite{Dunkl2018} to analyze
$J_{\alpha,S}\mathcal{D}_{i}$.

\begin{proposition}
\label{JDi0}(\cite[Cor. 6.2]{Dunkl2018}) Suppose $\left(  \beta,S\right)
\in\mathbb{N}_{0}^{N}\times\mathcal{Y}\left(  \tau\right)  $ and $\beta_{j}=0$
for $j\geq k$ with some fixed $k>1$ then $J_{\beta,S}\mathcal{D}_{j}=0$ for
all $j\geq k$.
\end{proposition}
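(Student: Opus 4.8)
The plan is to produce an exact identity that rewrites $\left(J_{\beta,S}\mathcal{D}_{j}\right)x_{j}$ as a scalar multiple of $J_{\beta,S}$ and then invoke the injectivity of multiplication by $x_{j}$ on $\mathcal{P}_{\tau}$. The starting point is that $\mathcal{U}_{j}$ is assembled from $\mathcal{D}_{j}$ and multiplication by $x_{j}$. Combining the definition of $\mathcal{U}_{j}$ with the Dunkl--Cherednik commutation $\left(x_{j}p\otimes S\right)\mathcal{D}_{j}-x_{j}\bigl(\left(p\otimes S\right)\mathcal{D}_{j}\bigr)=p\otimes S+\kappa\sum_{l\neq j}\left(p\otimes S\right)\left(j,l\right)$ (here $\left(j,l\right)$ is the transposition acting on $\mathcal{P}_{\tau}$ by $p\otimes S\mapsto p\left(x\left(j,l\right)\right)\otimes S\tau\left(\left(j,l\right)\right)$), the reflections with $l<j$ in the definition of $\mathcal{U}_{j}$ cancel, leaving
\begin{equation*}
\left(J_{\beta,S}\mathcal{D}_{j}\right)x_{j}=J_{\beta,S}\Big(\mathcal{U}_{j}-1-\kappa\sum_{l>j}\left(j,l\right)\Big).
\end{equation*}
Verifying this operator identity is routine but must be done carefully.

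Next I would insert the spectral data. We have $J_{\beta,S}\mathcal{U}_{j}=\zeta_{\beta,S}\left(j\right)J_{\beta,S}$, and for $j\geq k$ the hypothesis $\beta_{j}=0$ forces $r_{\beta}\left(j\right)=j$ and $\zeta_{\beta,S}\left(j\right)=1+\kappa c\left(j,S\right)$. The surviving reflections $\sum_{l>j}\left(j,l\right)$ are exactly the Jucys--Murphy element $\omega_{j}$ of (\ref{JMurph}), now realized on $\mathcal{P}_{\tau}$ rather than on $V_{\tau}$. The crucial claim is that $J_{\beta,S}$ remains an eigenfunction of this \emph{polynomial} Jucys--Murphy operator with the same eigenvalue it has on $V_{\tau}$:
\begin{equation*}
J_{\beta,S}\sum_{l>j}\left(j,l\right)=c\left(j,S\right)J_{\beta,S}.
\end{equation*}
Granting this, the two $\kappa c\left(j,S\right)$ terms cancel and the identity collapses to $\left(J_{\beta,S}\mathcal{D}_{j}\right)x_{j}=\beta_{j}J_{\beta,S}=0$; injectivity of multiplication by $x_{j}$ then yields $J_{\beta,S}\mathcal{D}_{j}=0$ for every $j\geq k$.

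The hard part is the displayed Jucys--Murphy identity, which I would derive from the tail symmetry of the Jack polynomials. Since $\beta$ is constant (equal to $0$) on the positions $k,\dots,N$, the sorting permutation $r_{\beta}$ fixes these positions pointwise and therefore commutes with every transposition $\left(j,l\right)$ with $k\le j,l\le N$; comparing the leading terms $x^{\beta}\otimes S\tau\left(r_{\beta}\right)$ then shows that $J_{\beta,S}\left(j,l\right)$ and $J_{\beta,S\tau\left(\left(j,l\right)\right)}$ share the same highest term. I would upgrade this to the full equality $J_{\beta,S}\left(j,l\right)=J_{\beta,S\tau\left(\left(j,l\right)\right)}$ by checking that a tail transposition $\left(j,l\right)$ preserves the joint $\left\{\mathcal{U}_{i}\right\}$-eigenstructure: it commutes with $\mathcal{U}_{i}$ for $i<k$, and among the tail indices it permutes the spectral coordinates $1+\kappa c\left(i,\cdot\right)$ consistently with $\tau$. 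Summing over $l>j$ and using $S\tau\left(\omega_{j}\right)=c\left(j,S\right)S$ gives the identity. The expected main obstacle is exactly the elimination of the lower-order ($\vartriangleleft\beta$) terms in $J_{\beta,S}\left(j,l\right)$, and this is precisely where the hypothesis that the zeros form a \emph{tail} (not merely $\beta_{j}=0$ for a single $j$) is needed. The boundary case $j=N$ is immediate and requires none of this: the sum $\sum_{l>N}$ is empty, the largest entry $N$ occupies the corner $\left(1,1\right)$ so $c\left(N,S\right)=0$, and the identity reads $\left(J_{\beta,S}\mathcal{D}_{N}\right)x_{N}=\beta_{N}J_{\beta,S}=0$ at once.
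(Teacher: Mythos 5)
The paper offers no internal proof of this proposition --- it is quoted from \cite[Cor.\ 6.2]{Dunkl2018} --- so your argument can only be judged on its own terms, and on those terms the skeleton is sound. The commutator identity $x_{j}\left(  \left(  p\otimes S\right)  \mathcal{D}_{j}\right)  =\left(  x_{j}p\otimes S\right)  \mathcal{D}_{j}-p\otimes S-\kappa\sum_{l\neq j}\left(  p\otimes S\right)  \left(  j,l\right)  $ is correct, combining it with the definition of $\mathcal{U}_{j}$ does cancel the $l<j$ reflections, and for $j\geq k$ one indeed has $r_{\beta}\left(  j\right)  =j$ and hence $\zeta_{\beta,S}\left(  j\right)  =1+\kappa c\left(  j,S\right)  $, so everything reduces, as you say, to the eigenvalue identity $J_{\beta,S}\sum_{l>j}\left(  j,l\right)  =c\left(  j,S\right)  J_{\beta,S}$ together with injectivity of multiplication by $x_{j}$. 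The only part that remains a sketch is that identity, and the sketch leans on the wrong lever: agreement of leading terms between $J_{\beta,S}\left(  j,l\right)  $ and $J_{\beta,S\tau\left(  \left(  j,l\right)  \right)  }$ cannot by itself force equality of the polynomials. The correct mechanism is the spectral characterization: for an adjacent transposition $s_{i}$ with $i\geq k$ (so $\beta_{i}=\beta_{i+1}=0$ and $\zeta_{\beta,S}\left(  i\right)  -\zeta_{\beta,S}\left(  i+1\right)  =\kappa\left(  c\left(  i,S\right)  -c\left(  i+1,S\right)  \right)  $), the commutation relations between $s_{i}$ and $\mathcal{U}_{i},\mathcal{U}_{i+1}$ show that $J_{\beta,S}s_{i}$ reproduces exactly the four-case action of $\tau\left(  s_{i}\right)  $ on the label $S$ from Section \ref{RepTh}; this is a standard structural fact from \cite{DL2011}, and iterating it over a reduced word for $\left(  j,l\right)  $ inside the subgroup generated by $\left\{  s_{i}:i\geq k\right\}  $ gives $J_{\beta,S}w=J_{\beta,S\tau\left(  w\right)  }$ and then the eigenvalue $c\left(  j,S\right)  $ from $S\tau\left(  \omega_{j}\right)  =c\left(  j,S\right)  S$. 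So your proof is complete modulo quoting (or reproving) that intertwining lemma; what your route buys over the bare citation is that it makes visible why the hypothesis is exactly right --- the operator $x_{j}\mathcal{D}_{j}$, corrected by the Jucys--Murphy element, has eigenvalue $\beta_{j}$ on $J_{\beta,S}$, which vanishes precisely when $\beta_{j}=0$ throughout the tail.
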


The next result uses the inner product on Jack polynomials for partition
labels $\beta$. The Pochhammer symbol is $\left(  a\right)  _{n}=\prod
_{i=1}^{n}\left(  a+i-1\right)  $.

\begin{proposition}
Suppose $\beta\in\mathbb{N}_{0}^{N,+}$ and $S\in\mathcal{Y}\left(
\tau\right)  $ then%
\begin{align*}
\left\Vert J_{\beta,S}\right\Vert ^{2}  &  =\left\langle S,S\right\rangle
_{0}\prod_{i=1}^{N}\left(  1+\kappa c\left(  i,S\right)  \right)  _{\beta_{i}%
}\\
&  \times\prod_{1\leq i<j\leq N}\prod_{\ell=1}^{\beta_{i}-\beta_{j}}\left(
1-\left(  \frac{\kappa}{\ell+\kappa\left(  c\left(  i,S\right)  -c\left(
j,S\right)  \right)  }\right)  ^{2}\right)  .
\end{align*}

\end{proposition}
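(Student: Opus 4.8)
The plan is to compute the squared norm $\left\Vert J_{\beta,S}\right\Vert ^{2}$ by exploiting the fact that for a \emph{partition} label $\beta\in\mathbb{N}_{0}^{N,+}$ we have $r_{\beta}=\mathrm{id}$, so the leading term of $J_{\beta,S}$ is simply $x^{\beta}\otimes S$. The natural strategy is an inductive one: relate $\left\Vert J_{\beta,S}\right\Vert^{2}$ to the norm of a Jack polynomial with a smaller label, using a raising/lowering operator identity. The key tool should be the adjointness of the Cherednik--Dunkl operators $\mathcal{U}_{i}$ (and of the Dunkl operators $\mathcal{D}_{i}$ together with the multiplications $x_{i}$) with respect to the contravariant form $\left\langle\cdot,\cdot\right\rangle$ on $\mathcal{P}_{\tau}$. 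First I would fix the normalization by the base case $\beta=0$: then $J_{0,S}=1\otimes S$ and $\left\Vert J_{0,S}\right\Vert^{2}=\left\langle S,S\right\rangle_{0}$, which matches the claimed formula since all Pochhammer and product factors are empty.

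The heart of the argument is a step that increases one coordinate of $\beta$ by moving from $\beta$ to a label $\gamma$ with $\gamma_{1}=\beta_{1}+1$ (say), using the interplay between $x_{1}$ and the affine/Knop--Sahi type recurrence for the $J_{\alpha,S}$. Concretely, I would track how applying $x_{N}$ (or a suitable cyclic composition $x_{i}$ together with the affine operation $\Phi$ that acts as $x_{N}\cdot(\text{cyclic shift})$) sends $J_{\beta,S}$ to a polynomial whose leading monomial corresponds to a relabeled composition, and read off the proportionality constant from the leading-term normalization in \eqref{Jxseries}. Because the $\mathcal{U}_{i}$ are self-adjoint for the form and the $J_{\alpha,S}$ are their simultaneous eigenfunctions with \emph{distinct} spectral vectors, the $J_{\alpha,S}$ are pairwise orthogonal; this orthogonality is what lets the norm factor through the recurrence as a single scalar at each step rather than a full expansion. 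The spectral eigenvalue $\zeta_{\alpha,S}(i)=\alpha_{i}+1+\kappa c(r_{\alpha}(i),S)$ supplies exactly the linear factors $1+\kappa c(i,S)$ and $\ell+\kappa(c(i,S)-c(j,S))$ that appear in the target product.

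In carrying out the induction I would organize the two product families separately. The factor $\prod_{i=1}^{N}\left(1+\kappa c(i,S)\right)_{\beta_{i}}$ accumulates one Pochhammer term each time coordinate $i$ is raised, reflecting the diagonal part of the spectral vector; the factor $\prod_{1\leq i<j\leq N}\prod_{\ell=1}^{\beta_{i}-\beta_{j}}\bigl(1-(\kappa/(\ell+\kappa(c(i,S)-c(j,S))))^{2}\bigr)$ records the off-diagonal corrections that arise when a raising step creates or destroys an inversion relative to another coordinate, i.e. when two coordinates become unequal. Each such factor has the shape $u$-type $\left(1-(\cdot)^{2}\right)$ reminiscent of the normalization constant $\left\langle S,S\right\rangle_{0}$ in the representation $V_{\tau}$ itself, and indeed these two sources of $\bigl(1-c^{-2}\bigr)$ factors must be kept disjoint. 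The main obstacle I anticipate is precisely the bookkeeping at a raising step where $\beta_{i}-\beta_{j}$ changes: one must verify that the scalar emitted by the recurrence is exactly the ratio of consecutive products, which requires carefully matching the affine shift's effect on $r_{\alpha}$ and on the spectral vector against the product indices, and checking that intermediate non-partition labels (encountered while cycling) contribute norm ratios that telescope cleanly back to partition labels. Once that single-step scalar is identified and shown to equal the quotient of the claimed formula for $\gamma$ by that for $\beta$, the induction closes.
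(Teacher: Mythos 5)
The paper does not actually prove this proposition: immediately after the corollary it states that ``these norm formulas are results of Griffeth \cite{G2010} specialized to the symmetric groups,'' so the only in-paper ``proof'' is a citation. Your outline is therefore being compared against the strategy of the cited sources (Griffeth, and Dunkl--Luque), and in that comparison your route is the right one in broad strokes: induction along the Yang--Baxter graph, base case $J_{0,S}=1\otimes S$ with $\Vert J_{0,S}\Vert^{2}=\langle S,S\rangle_{0}$, raising via the affine step $\beta\mapsto\beta\Phi=(\beta_{2},\ldots,\beta_{N},\beta_{1}+1)$ realized as multiplication by $x_{N}$ composed with a cyclic shift, sorting back to a partition with the $s_{i}$-steps, and orthogonality of the $J_{\alpha,S}$ coming from distinctness of spectral vectors together with self-adjointness of the $\mathcal{U}_{i}$. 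The Pochhammer factors $(1+\kappa c(i,S))_{\beta_{i}}$ do come from the affine steps and the factors $1-\bigl(\kappa/(\ell+\kappa(c(i,S)-c(j,S)))\bigr)^{2}$ from the sorting steps, exactly as you predict.

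The genuine gap is that the entire quantitative content of the proposition is concentrated in the ``single-step scalar'' that you explicitly defer. Two identities have to be proved, not just anticipated: (i) for the affine step, $\langle x_{N}f,x_{N}f\rangle=\zeta\,\langle f,f\rangle$ where $\zeta$ is the relevant eigenvalue $\zeta_{\beta\Phi,S}(N)=\beta_{1}+1+\kappa c(r_{\beta}(1),S)$ --- this requires the adjointness relation between $x_{N}$ and $\mathcal{D}_{N}$ for the contravariant form (a form this paper never even defines) and the identity expressing $x_{N}\mathcal{D}_{N}$ in terms of $\mathcal{U}_{N}$ plus transposition terms that annihilate or preserve the eigenfunction; and (ii) for a sorting step with $\zeta_{i}\neq\zeta_{i+1}$, the norm picks up exactly $1-(\zeta_{i}-\zeta_{i+1})^{-2}$ evaluated at $\kappa$-spectral differences $\ell+\kappa(c(i,S)-c(j,S))$, which requires the explicit $s_{i}$-recurrence for $J_{\alpha,S}$ on the Yang--Baxter graph, another ingredient not stated in your outline. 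Without (i) and (ii) written down and the telescoping over the intermediate composition labels verified, the argument is a correct roadmap but not a proof; since the paper itself outsources the proof to \cite{G2010}, the honest conclusion is that your proposal reconstructs the right strategy while leaving precisely the computational core to be imported from that reference.
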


\begin{corollary}
Suppose $\alpha=\left(  m,0,\ldots,0\right)  $ then%
\[
\left\Vert J_{\alpha,S}\right\Vert ^{2}=\left\langle S,S\right\rangle
_{0}\left(  1+\kappa c\left(  1,S\right)  \right)  _{m}\prod_{j=2}^{N}%
\prod_{\ell=1}^{m}\left(  1-\left(  \frac{\kappa}{\ell+\kappa\left(  c\left(
1,S\right)  -c\left(  j,S\right)  \right)  }\right)  ^{2}\right)  .
\]

\end{corollary}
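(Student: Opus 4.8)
The corollary is the specialization of the preceding Proposition to the composition $\alpha=\left(m,0,\ldots,0\right)$, which is already a partition, so I would simply substitute $\beta=\alpha$ into the general norm formula and simplify. First I would record the relevant parts of $\alpha$: here $\alpha_{1}=m$ and $\alpha_{j}=0$ for $2\leq j\leq N$. The single-factor product $\prod_{i=1}^{N}\left(1+\kappa c\left(i,S\right)\right)_{\alpha_{i}}$ then collapses, because $\left(a\right)_{0}=1$ by the convention $\left(a\right)_{n}=\prod_{i=1}^{n}\left(a+i-1\right)$ (the empty product), leaving only the $i=1$ term, namely $\left(1+\kappa c\left(1,S\right)\right)_{m}$. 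This is exactly the first product factor appearing in the corollary.

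Next I would handle the double product $\prod_{1\leq i<j\leq N}\prod_{\ell=1}^{\beta_{i}-\beta_{j}}\bigl(1-(\cdots)^{2}\bigr)$. With $\alpha_{1}=m$ and all other entries zero, the inner product range $1\leq\ell\leq\alpha_{i}-\alpha_{j}$ is empty (hence contributes a factor of $1$) unless $\alpha_{i}>\alpha_{j}$, which forces $i=1$ and $j\geq2$; in that case $\alpha_{i}-\alpha_{j}=m-0=m$. Thus the only surviving pairs are $\left(1,j\right)$ with $2\leq j\leq N$, and for each such pair $\ell$ runs from $1$ to $m$. Substituting $c\left(i,S\right)-c\left(j,S\right)=c\left(1,S\right)-c\left(j,S\right)$ into the general expression yields precisely $\prod_{j=2}^{N}\prod_{\ell=1}^{m}\bigl(1-(\frac{\kappa}{\ell+\kappa(c\left(1,S\right)-c\left(j,S\right))})^{2}\bigr)$, matching the stated formula.

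Since $\alpha=\left(m,0,\ldots,0\right)$ is nonincreasing, it lies in $\mathbb{N}_{0}^{N,+}$, so the hypotheses of the Proposition are met and no additional justification is needed. There is no genuine obstacle here: the entire content of the proof is the bookkeeping observation that zero exponents kill the Pochhammer factors and the strict inequality $\alpha_{i}>\alpha_{j}$ needed for a nonempty inner product can only be realized by the pairs $\left(1,j\right)$. The one point to state carefully is the empty-product convention, both for the Pochhammer symbols of order $0$ and for the inner products over empty index ranges; once that is fixed the result is immediate.
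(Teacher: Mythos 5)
Your proposal is correct and matches the intended argument: the paper treats this corollary as an immediate specialization of the preceding norm formula to $\beta=\alpha=(m,0,\ldots,0)$, with the zero exponents killing the Pochhammer factors for $i\geq 2$ and the double product reducing to the pairs $(1,j)$ exactly as you describe. Your explicit attention to the empty-product conventions is the only content needed, and it is handled correctly.
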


These norm formulas are results of Griffeth \cite{G2010} specialized to the
symmetric groups. The final ingredient for the formula is a special case of
\cite[Thm. 6.3]{Dunkl2018}.

\begin{proposition}
Suppose $\alpha=\left(  m,0,\ldots,0\right)  $ and $\widehat{\alpha}=\left(
m-1,0,\ldots,0\right)  $ then%
\begin{align}
J_{\alpha,S}\mathcal{D}_{1}  &  =\frac{\left\Vert J_{\alpha,S}\right\Vert
^{2}}{\left\Vert J_{\widehat{\alpha},S}\right\Vert ^{2}}J_{\widehat{\alpha}%
,S}\nonumber\\
&  =\left(  m+\kappa c\left(  1,S\right)  \right)  \prod_{j=2}^{N}\left(
1-\left(  \frac{\kappa}{m+\kappa\left(  c\left(  1,S\right)  -c\left(
j,S\right)  \right)  }\right)  ^{2}\right)  J_{\widehat{\alpha},S}.
\label{JDf1}%
\end{align}

\end{proposition}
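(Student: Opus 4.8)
The plan is to prove the two displayed equalities separately. The first is the structural assertion that $\mathcal{D}_{1}$ carries $J_{\alpha,S}$ to a scalar multiple of $J_{\widehat{\alpha},S}$ with the scalar equal to the ratio of squared norms; the second is a purely algebraic simplification obtained by substituting the Corollary. I expect the substitution to be routine and the first equality to carry the real content (it is the specialization of \cite[Thm. 6.3]{Dunkl2018} quoted just before the statement, so the task is to recover it from the orthogonality structure).

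For the first equality I would work with the contravariant form $\left\langle \cdot,\cdot\right\rangle$ on $\mathcal{P}_{\tau}$ relative to which the Jack polynomials $\left\{ J_{\gamma,S^{\prime}}\right\}$ are pairwise orthogonal with $\left\langle J_{\gamma,S^{\prime}},J_{\gamma,S^{\prime}}\right\rangle =\left\Vert J_{\gamma,S^{\prime}}\right\Vert ^{2}$, and relative to which each $\mathcal{D}_{i}$ is adjoint to multiplication by $x_{i}$, that is $\left\langle p\mathcal{D}_{i},q\right\rangle =\left\langle p,x_{i}q\right\rangle$ (the standard property of this form; cf. Griffeth \cite{G2010} and \cite{DL2011}). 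Since $\mathcal{D}_{1}$ lowers degree by $1$, the polynomial $J_{\alpha,S}\mathcal{D}_{1}$ is homogeneous of degree $m-1$, so I expand it in the Jack basis and read off each coefficient by adjointness, $\left\langle J_{\alpha,S}\mathcal{D}_{1},J_{\gamma,S^{\prime}}\right\rangle =\left\langle J_{\alpha,S},x_{1}J_{\gamma,S^{\prime}}\right\rangle$ with $\left\vert \gamma\right\vert =m-1$. The next step is a triangularity argument: $x_{1}J_{\gamma,S^{\prime}}$ is a degree-$m$ polynomial whose monomial indices are all dominated by $\gamma+e_{1}$ (where $e_{1}=\left( 1,0,\ldots,0\right)$), so in the Jack expansion only $J_{\mu,S^{\prime\prime}}$ with $\mu\vartriangleleft\gamma+e_{1}$ or $\mu=\gamma+e_{1}$ can appear. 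As $\alpha=\left( m,0,\ldots,0\right)$ is the maximum of the dominance order in degree $m$, the component along $J_{\alpha,S}$ can be nonzero only when $\gamma+e_{1}=\alpha$, i.e. $\gamma=\widehat{\alpha}$. For $\gamma=\widehat{\alpha}$ one has $r_{\widehat{\alpha}}=\mathrm{id}$, hence $x_{1}J_{\widehat{\alpha},S^{\prime}}=J_{\alpha,S^{\prime}}+\left( \text{strictly lower terms}\right)$; since distinct tableaux $S,S^{\prime}$ give distinct content vectors and therefore distinct spectral vectors, $J_{\alpha,S}$ and $J_{\alpha,S^{\prime}}$ are orthogonal unless $S^{\prime}=S$. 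Thus only $\gamma=\widehat{\alpha},\,S^{\prime}=S$ survives, with $\left\langle J_{\alpha,S},x_{1}J_{\widehat{\alpha},S}\right\rangle =\left\Vert J_{\alpha,S}\right\Vert ^{2}$; dividing by $\left\Vert J_{\widehat{\alpha},S}\right\Vert ^{2}$ yields the first equality.

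For the second equality I would substitute the Corollary into the norm ratio. Writing $a=1+\kappa c\left( 1,S\right)$, the Pochhammer factors contribute $\left( a\right) _{m}/\left( a\right) _{m-1}=a+m-1=m+\kappa c\left( 1,S\right)$, while for each fixed $j\geq2$ the quotient of the products $\prod_{\ell=1}^{m}/\prod_{\ell=1}^{m-1}$ leaves only the $\ell=m$ factor, namely $1-\left( \kappa/\left( m+\kappa\left( c\left( 1,S\right) -c\left( j,S\right) \right) \right) \right) ^{2}$; the common factor $\left\langle S,S\right\rangle _{0}$ cancels. This reproduces exactly \eqref{JDf1}.

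The hard part will not be the combinatorics but the two foundational points underlying the first equality: that $\mathcal{D}_{1}$ is adjoint to multiplication by $x_{1}$ under precisely the form for which the norm formulas are stated, and the careful bookkeeping of the $V_{\tau}$-valued coefficients in the triangularity step, so that the $\vartriangleleft$ order and the spectral-vector separation cooperate to isolate $\gamma=\widehat{\alpha},\,S^{\prime}=S$. If instead one simply invokes \cite[Thm. 6.3]{Dunkl2018} for the first equality, both difficulties are already packaged there, and only the algebraic reduction of the norm ratio described above remains.
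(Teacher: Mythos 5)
Your proposal is correct, and its second half coincides with the paper's proof: the paper obtains the first displayed equality by citing \cite[Thm. 6.3]{Dunkl2018} and then simply states that the norm ratios are computed ``which involves much cancellation'' --- exactly the Pochhammer ratio $(a)_m/(a)_{m-1}=m+\kappa c(1,S)$ and the extraction of the $\ell=m$ factor for each $j$ that you carry out, and your computation checks. Where you go beyond the paper is in unpacking the cited theorem: your adjointness-plus-triangularity argument (expand $J_{\alpha,S}\mathcal{D}_{1}$ in the Jack basis, use $\left\langle p\mathcal{D}_{1},q\right\rangle=\left\langle p,x_{1}q\right\rangle$, and observe that $x_{1}J_{\gamma,S^{\prime}}$ can have a $J_{\alpha,S}$-component only when $\gamma+e_{1}=\alpha$ because $(m,0,\ldots,0)$ is the unique maximum of $\vartriangleright$ in degree $m$, with $S^{\prime}=S$ then forced by the orthogonality coming from distinct content, hence spectral, vectors) is a correct self-contained derivation of the first line, and is essentially the proof of the quoted theorem specialized to $\alpha=(m,0,\ldots,0)$. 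The only caveats are the foundational ones you already flag: that the bilinear form for which $\mathcal{D}_{i}$ is adjoint to multiplication by $x_{i}$ is the same form for which the norm formulas of the preceding Corollary are stated, and that $\left\Vert J_{\widehat{\alpha},S}\right\Vert^{2}\neq0$ generically so the division is legitimate; both are standard and available in \cite{G2010} and \cite{DL2011}. So: the reduction to (\ref{JDf1}) is the same computation as the paper's, and your first-equality argument is a sound elementary replacement for the paper's citation.
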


\begin{proof}
The first line comes from \cite[Thm. 6.3]{Dunkl2018}. Then the norm ratios are
computed, which involves much cancellation.
\end{proof}

Denote the prefactor of $J_{\widehat{\alpha},S}$ in equation (\ref{JDf1}) by
$C_{S,m}\left(  \kappa\right)  $. Our interest is in the zeros of
$C_{S,m}\left(  \kappa\right)  $ as a function of $\kappa$. We will see that
$C_{S,m}\left(  \kappa\right)  $ depends only on $\tau$ and the location of
$1$ in $S$. The idea is to group entries of $S$ by row and use telescoping
properties. There is a simple formula (proven inductively)%
\[
\prod_{i=a}^{b}\frac{g\left(  i+1\right)  g\left(  i-1\right)  }{g\left(
i\right)  ^{2}}=\frac{g\left(  a-1\right)  g\left(  b+1\right)  }{g\left(
a\right)  g\left(  b\right)  }%
\]
where $g$ is a function on $\mathbb{Z}$ and $a\leq b$. For the present
application set $g\left(  i\right)  =m+\kappa\left(  c\left(  1,S\right)
-i\right)  $.

\begin{definition}
\label{tau_hat}The partition $\widehat{\tau}\in\mathbb{N}_{0}^{N,+}$ is
obtained from $\tau$ by removing the box $\left(  \operatorname{row}\left(
1,S\right)  ,\operatorname{col}\left(  1,S\right)  \right)  $: for $1\leq
i\leq\ell\left(  \tau\right)  $ set $\widehat{\tau}_{i}=\tau_{i}-1$ if
$\operatorname{row}\left(  1,S\right)  =i$ otherwise set $\widehat{\tau}%
_{i}=\tau_{i}$.
\end{definition}

The part of the product in $C_{S,m}\left(  \kappa\right)  $ coming from row
$\#i$ has $c\left(  j,S\right)  $ ranging from $1-i$ to $\widehat{\tau}_{i}-i$
so the corresponding subproduct is%
\[
\prod_{j=1-i}^{\widehat{\tau}_{i}-i}\frac{g\left(  j+1\right)  g\left(
j-1\right)  }{g\left(  j\right)  ^{2}}=\frac{g\left(  -i\right)  g\left(
\widehat{\tau}_{i}-i+1\right)  }{g\left(  1-i\right)  g\left(  \widehat{\tau
}_{i}-i\right)  }.
\]
Multiply these factors for $i=1,2,\ldots\ell\left(  \tau\right)  $; note that%
\[
\prod_{i=1}^{\ell\left(  \tau\right)  }\frac{g\left(  -i\right)  }{g\left(
1-i\right)  }=\frac{g\left(  -\ell\left(  \tau\right)  \right)  }{g\left(
0\right)  }=\frac{m+\kappa\left(  c\left(  1,S\right)  +\ell\left(
\tau\right)  \right)  }{m+\kappa c\left(  1,S\right)  },
\]
and thus%
\begin{equation}
C_{S,m}\left(  \kappa\right)  =\left(  m+\kappa\left(  c\left(  1,S\right)
+\ell\left(  \tau\right)  \right)  \right)  \prod_{i=1}^{\ell\left(
\tau\right)  }\frac{m+\kappa\left(  c\left(  1,S\right)  -\widehat{\tau}%
_{i}+i-1\right)  }{m+\kappa\left(  c\left(  1,S\right)  -\widehat{\tau}%
_{i}+i\right)  }. \label{CSk1}%
\end{equation}
As stated before the formula depends only on $\tau$ and the location of $1$ in
$S$. More simplification is possible due to telescoping if some $\widehat
{\tau}_{i}$'s are equal.

\begin{definition}
\label{T_tau}For $\widehat{\tau}$ as in Definition \ref{tau_hat} define the
increasing sequence $\mathcal{I}\left(  \widehat{\tau}\right)  =\left[
i_{1},i_{2},\ldots,i_{k}\right]  $ such that $i_{1}=1$, and $2\leq s\leq k$
implies $\widehat{\tau}_{i_{s}}<\widehat{\tau}_{i_{s-1}}$ and $\widehat{\tau
}_{j}=\widehat{\tau}_{i_{s-1}}$ for $i_{s-1}\leq j<i_{s}$. The last element
$i_{k}=\ell\left(  \tau\right)  +1$. Let $\mathcal{Z}\left(  \widehat{\tau
}\right)  =\left\{  \widehat{\tau}_{i_{s}}+1-i_{s}:1\leq s\leq k-1\right\}
\cup\left\{  -\ell\left(  \tau\right)  :\widehat{\tau}_{\ell\left(
\tau\right)  }\geq1\right\}  $ (the latter set is omitted when $\widehat{\tau
}_{\ell\left(  \tau\right)  }=0$).
\end{definition}

\begin{example}
Suppose $\widehat{\tau}=\left[  5,5,4,4,4,3,3,2,1\right]  $ then
$\mathcal{I}\left(  \widehat{\tau}\right)  =\left[  1,3,6,8,9,10\right]  $,
and $\mathcal{Z}\left(  \widehat{\tau}\right)  =\left\{
5,2,-2,-5,-7,-9\right\}  $. If $\widehat{\tau}=\left[
5,5,4,4,4,3,3,3,0\right]  $ then $\mathcal{I}\left(  \widehat{\tau}\right)
=\left[  1,3,6,9\right]  $ and $\mathcal{Z}\left(  \widehat{\tau}\right)
=\left\{  5,2,-2,-8\right\}  $.
\end{example}

Let $\widehat{S}$ denote the tableau formed by deleting the box $\left(
\operatorname{row}\left(  1,S\right)  ,\operatorname{col}\left(  1,S\right)
\right)  $ from $S$. The key property of $\mathcal{I}\left(  \widehat{\tau
}\right)  $ is that it controls the possible locations where a box containing
$1$ could be adjoined to $\widehat{S}$ to form a RSYT. These locations are
$\left\{  \left(  1,\widehat{\tau}_{1}+1\right)  ,\ldots,\left(
i_{s},\widehat{\tau}_{i_{s}}+1\right)  ,\ldots\right\}  $. If $\widehat{\tau
}_{\ell\left(  \tau\right)  }=0$ then the last location is $\left(
\ell\left(  \tau\right)  ,1\right)  $ otherwise it is $\left(  \ell\left(
\tau\right)  +1,1\right)  $. Thus $\mathcal{Z}\left(  \widehat{\tau}\right)  $
is the set of contents of locations in the list. Evaluate the part of the
product in formula (\ref{CSk1}) for the range $i_{s}\leq j<i_{s_{+1}}$ to
obtain%
\[
\prod_{j=i_{s}}^{i_{s+1}-1}\frac{m+\kappa\left(  c\left(  1,S\right)
-\widehat{\tau}_{i_{s}}+j-1\right)  }{m+\kappa\left(  c\left(  1,S\right)
-\widehat{\tau}_{i_{s}}+j\right)  }=\frac{m+\kappa\left(  c\left(  1,S\right)
-\left(  \widehat{\tau}_{i_{s}}+1-i_{s}\right)  \right)  }{m+\kappa\left(
c\left(  1,S\right)  -\left(  \widehat{\tau}_{i_{s}}+1-i_{s+1}\right)
\right)  }.
\]
This completes the proof of the following:

\begin{proposition}
\label{CSmprod}For $\widehat{\tau}$ and $\mathcal{I}\left(  \widehat{\tau
}\right)  $ as in Definitions \ref{tau_hat} and \ref{T_tau}%
\[
C_{S,m}\left(  \kappa\right)  =\left(  m+\kappa\left(  c\left(  1,S\right)
+\ell\left(  \tau\right)  \right)  \right)  \prod_{s=1}^{k-1}\frac
{m+\kappa\left(  c\left(  1,S\right)  -\left(  \widehat{\tau}_{i_{s}}%
+1-i_{s}\right)  \right)  }{m+\kappa\left(  c\left(  1,S\right)  -\left(
\widehat{\tau}_{i_{s}}+1-i_{s+1}\right)  \right)  },
\]
where $i_{k}=\ell\left(  \tau\right)  +1$.
\end{proposition}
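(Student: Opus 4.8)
The plan is to begin from formula (\ref{CSk1}) and carry out the telescoping that opens up once consecutive equal values of $\widehat{\tau}_i$ are grouped together. The leading factor $m+\kappa(c(1,S)+\ell(\tau))$ is identical in (\ref{CSk1}) and in the asserted formula, so the entire task reduces to rewriting the product over $i$ in (\ref{CSk1}). Following the substitution already suggested in the text, set $g(x)=m+\kappa(c(1,S)-x)$; then the $i$-th factor of that product is $g(\widehat{\tau}_i-i+1)/g(\widehat{\tau}_i-i)$, a ratio of two shifted values of the single function $g$.

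Next I would split $\{1,\ldots,\ell(\tau)\}$ into the half-open blocks $[i_s,i_{s+1})$, $s=1,\ldots,k-1$, cut out by $\mathcal{I}(\widehat{\tau})$. By Definition \ref{T_tau}, $\widehat{\tau}_j$ is constant on each such block, equal to $\widehat{\tau}_{i_s}$, and this constancy is exactly what makes the telescoping work. Writing $T:=\widehat{\tau}_{i_s}$, the denominator $g(T-j)$ of the $j$-th factor equals the numerator $g(T-(j+1)+1)$ of the $(j+1)$-th factor, so the block product collapses to $g(T-i_s+1)/g(T-i_{s+1}+1)$: the numerator survives only at the left endpoint $j=i_s$ and the denominator only at the right endpoint $j=i_{s+1}-1$. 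Unwinding $g$ turns this into the $s$-th factor of the claimed product, which is precisely the one-line identity displayed just above the statement.

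It then remains to reassemble the pieces. Since $i_1=1$ and $i_k=\ell(\tau)+1$, the blocks $[i_s,i_{s+1})$ tile $\{1,\ldots,\ell(\tau)\}$ without overlap, so the product over $i$ factors as $\prod_{s=1}^{k-1}$ of the per-block pieces; replacing each by its collapsed value and restoring the common leading factor gives the stated formula.

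The argument is elementary single-step telescoping, so there is no analytic content; the only point needing care is the combinatorial bookkeeping. I expect the main difficulty to lie in checking cleanly that $\widehat{\tau}_j$ really is constant on each half-open block and that the left-endpoint shift $i_s$ in the surviving numerator is not confused with the right-endpoint shift $i_{s+1}$ in the surviving denominator, so that no factor is dropped or double counted at the seams between blocks. A useful consistency check is that the surviving numerator arguments $\widehat{\tau}_{i_s}+1-i_s$ are exactly the elements of $\mathcal{Z}(\widehat{\tau})$ from Definition \ref{T_tau}, i.e. the contents of the boxes at which a cell containing $1$ may be adjoined to $\widehat{S}$; this both pins down the indexing and explains why, unlike the within-block factors, the blocks themselves do not telescope into one another, the denominator contents $\widehat{\tau}_{i_s}+1-i_{s+1}$ being genuinely uncancelled.
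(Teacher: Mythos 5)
Your argument is correct and is essentially the paper's own proof: the paper likewise starts from formula (\ref{CSk1}), partitions $\{1,\ldots,\ell(\tau)\}$ into the blocks $[i_s,i_{s+1})$ on which $\widehat{\tau}$ is constant, and collapses each block by the same one-line telescoping identity displayed just before the proposition. The only quibble concerns your closing consistency check: the surviving numerator contents $\widehat{\tau}_{i_s}+1-i_s$, $1\leq s\leq k-1$, exhaust $\mathcal{Z}(\widehat{\tau})$ only when $\widehat{\tau}_{\ell(\tau)}=0$; otherwise $\mathcal{Z}(\widehat{\tau})$ contains the extra element $-\ell(\tau)$, which is accounted for by the leading factor $m+\kappa\left(c\left(1,S\right)+\ell\left(\tau\right)\right)=g\left(-\ell(\tau)\right)$ rather than by the product.
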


If $\widehat{\tau}_{\ell\left(  \tau\right)  }=0$ then the entry at $\left(
\ell\left(  \tau\right)  ,1\right)  $ is $1$, $c\left(  1,S\right)
=1-\ell\left(  \tau\right)  $, $i_{k-1}=\ell\left(  \tau\right)  $ and the
last factor in the product (for $s=k-1$) equals $\frac{m}{m+\kappa}$, thus
cancelling out the leading factor $m+\kappa\left(  c\left(  1,S\right)
+\ell\left(  \tau\right)  \right)  =m+\kappa$.

\begin{lemma}
\label{not=}Suppose $1\leq a,b\leq k-1$ then $\widehat{\tau}_{i_{a}}-i_{a}%
\neq\widehat{\tau}_{i_{b}}-i_{b+1}$.
\end{lemma}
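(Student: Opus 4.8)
The plan is to recognize the two quantities $\widehat{\tau}_{i_a}-i_a$ and $\widehat{\tau}_{i_b}-i_{b+1}$ as (shifted) contents of, respectively, the addable and removable corners of the Ferrers diagram of $\widehat{\tau}$, and then to exhibit a single strictly monotone chain that interleaves all of them. Concretely, for $1\le s\le k-1$ I would write $A_{s}:=\widehat{\tau}_{i_{s}}-i_{s}$ and $R_{s}:=\widehat{\tau}_{i_{s}}-i_{s+1}$, so that the claim is exactly that $A_{a}\ne R_{b}$ for all admissible $a,b$. (When $k=1$ the index range is empty and there is nothing to prove, so assume $k\ge 2$.) The geometric content is the classical interlacing of corner contents along the rim of a Young diagram, but the argument can be made purely arithmetic from the defining properties of $\mathcal{I}(\widehat{\tau})$ in Definition \ref{T_tau}.

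First I would record the two monotonicity inputs furnished by that definition: the index sequence is strictly increasing, $i_{1}<i_{2}<\cdots<i_{k}$, and the block values are strictly decreasing, $\widehat{\tau}_{i_{1}}>\widehat{\tau}_{i_{2}}>\cdots>\widehat{\tau}_{i_{k-1}}$. From the first, for each $s$ one computes $A_{s}-R_{s}=i_{s+1}-i_{s}>0$, so $A_{s}>R_{s}$. From the second, comparing consecutive blocks gives $R_{s}-A_{s+1}=\widehat{\tau}_{i_{s}}-\widehat{\tau}_{i_{s+1}}>0$, so $R_{s}>A_{s+1}$. Chaining these two families of inequalities alternately then yields the single descending chain $A_{1}>R_{1}>A_{2}>R_{2}>\cdots>A_{k-1}>R_{k-1}$, a strictly decreasing sequence of $2(k-1)$ integers. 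In particular every $A_{a}$ differs from every $R_{b}$, which is precisely the assertion $\widehat{\tau}_{i_{a}}-i_{a}\ne\widehat{\tau}_{i_{b}}-i_{b+1}$.

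There is no real obstacle here beyond index bookkeeping. One must check that $i_{b+1}$ is always defined, which it is since $b\le k-1$ forces $b+1\le k$ and $i_{k}=\ell(\tau)+1$; and one must use the \emph{strictness} of both monotonicities, since a single repeated value would collapse a strict inequality and break the chain, permitting a coincidence $A_{a}=R_{b}$. This strictness is exactly what the definition of $\mathcal{I}(\widehat{\tau})$ guarantees. The only mild subtlety worth flagging is the boundary of the chain: the lemma restricts to $a,b\le k-1$, so the extra addable corner at the bottom of the diagram (the element $-\ell(\tau)$ that may occur in $\mathcal{Z}(\widehat{\tau})$) never enters the comparison and need not be treated separately.
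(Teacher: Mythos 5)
Your proof is correct. It rests on exactly the same two facts the paper uses --- that $i_{1}<i_{2}<\cdots<i_{k}$ is strictly increasing and $\widehat{\tau}_{i_{1}}>\widehat{\tau}_{i_{2}}>\cdots$ is strictly decreasing --- but you organize them differently. The paper argues by contradiction: assuming $\widehat{\tau}_{i_{a}}-i_{a}=\widehat{\tau}_{i_{b}}-i_{b+1}$, it rewrites this as $i_{a}-i_{b+1}=\widehat{\tau}_{i_{a}}-\widehat{\tau}_{i_{b}}$ and runs a case analysis on the sign of the left side, each case forcing the two sides to have opposite signs. You instead give a direct interlacing argument, chaining $A_{s}>R_{s}$ (from the increase of the $i$'s) with $R_{s}>A_{s+1}$ (from the decrease of the $\widehat{\tau}_{i_{s}}$'s) into the single strictly decreasing sequence $A_{1}>R_{1}>A_{2}>\cdots>A_{k-1}>R_{k-1}$. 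Your version is arguably cleaner, avoids the case split, and proves slightly more: all $2(k-1)$ quantities are pairwise distinct, not merely $A_{a}\neq R_{b}$, which is in the spirit of the classical interlacing of addable and removable corner contents that you correctly identify as the geometric content. Your boundary checks (the case $k=1$, the definedness of $i_{b+1}=i_{k}$, and the irrelevance of the extra element $-\ell(\tau)$ of $\mathcal{Z}(\widehat{\tau})$) are all appropriate.
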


\begin{proof}
By construction the sequence $\left\{  \widehat{\tau}_{i_{a}}\right\}
_{a\geq1}$ is strictly decreasing and the sequence $\left\{  i_{a}\right\}
_{a\geq1}$ is strictly increasing. Suppose for some $a,b$ the equation
$\widehat{\tau}_{i_{a}}-i_{a}=\widehat{\tau}_{i_{b}}-i_{b+1}$ holds, that is,
$i_{a}-i_{b+1}=\widehat{\tau}_{i_{a}}-\widehat{\tau}_{i_{b}}$. Clearly $a=b$
or $a=b+1$ are impossible. Suppose $i_{a}-i_{b+1}>0$ then $b<b+1<a$ implying
$\widehat{\tau}_{i_{a}}-\widehat{\tau}_{i_{b}}<0$, a contradiction. Similarly
suppose $i_{a}-i_{b+1}<0$ then $a<b+1$, furthermore that $a<b$ since $a=b$ is
impossible, thus $\widehat{\tau}_{i_{a}}-\widehat{\tau}_{i_{b}}>0$, again a
contradiction. This completes the proof.
\end{proof}

\begin{proposition}
\label{Ckzeros}The set of zeros of $C_{m,S}\left(  \kappa\right)  $ is\newline
$\left\{  -\dfrac{m}{c\left(  1,S\right)  -z}:z\in\mathcal{Z}\left(
\widehat{\tau}\right)  ,z\neq c\left(  1,S\right)  \right\}  .$
\end{proposition}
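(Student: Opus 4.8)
The plan is to read the zeros straight off the factored form in Proposition \ref{CSmprod}, viewing $C_{S,m}(\kappa)$ as a ratio of linear polynomials in $\kappa$ and deciding which numerator factors survive cancellation against the denominator. To make cancellation visible I would write every factor as $g(w):=m+\kappa(c(1,S)-w)$, the same $g$ used to derive (\ref{CSk1}). Since $\alpha=(m,0,\ldots,0)$ and $\widehat{\alpha}=(m-1,0,\ldots,0)$ force $m\geq1$, the factor $g(w)$ is the nonzero constant $m$ when $w=c(1,S)$ and otherwise has the single simple zero $\kappa=-m/(c(1,S)-w)$. The decisive point is that $g(w)=g(w')$ as polynomials in $\kappa$ exactly when $w=w'$, and two such factors share a zero exactly when they are equal; this lets me detect every cancellation purely at the level of the arguments $w$.

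In this notation Proposition \ref{CSmprod} becomes $C_{S,m}(\kappa)=g(n_0)\prod_{s=1}^{k-1}g(n_s)/g(d_s)$ with $n_0=-\ell(\tau)$, $n_s=\widehat{\tau}_{i_s}+1-i_s$ and $d_s=\widehat{\tau}_{i_s}+1-i_{s+1}$. The engine of the argument is Lemma \ref{not=}: rewritten, it says precisely $n_a\neq d_b$ for all $1\leq a,b\leq k-1$, so no numerator factor from the product cancels, or even shares a zero with, any denominator factor from the product. Thus the only factor whose fate I still have to decide is the leading factor $g(n_0)=g(-\ell(\tau))$.

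Next I would determine when some $d_s$ equals $-\ell(\tau)$. A brief estimate ($d_s\geq1-\ell(\tau)>-\ell(\tau)$ for $s<k-1$, while $d_{k-1}=\widehat{\tau}_{\ell(\tau)}-\ell(\tau)$) shows this occurs exactly when $\widehat{\tau}_{\ell(\tau)}=0$, which splits the proof into two cases. If $\widehat{\tau}_{\ell(\tau)}\geq1$ the leading factor cancels nothing, and the surviving numerator arguments are $\{n_0,n_1,\ldots,n_{k-1}\}$, which by Definition \ref{T_tau} is exactly $\mathcal{Z}(\widehat{\tau})$ (here $-\ell(\tau)\in\mathcal{Z}(\widehat{\tau})$). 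If $\widehat{\tau}_{\ell(\tau)}=0$ then $g(n_0)$ cancels $g(d_{k-1})$, leaving the surviving numerator arguments $\{n_1,\ldots,n_{k-1}\}$, which is again $\mathcal{Z}(\widehat{\tau})$ (now with $-\ell(\tau)\notin\mathcal{Z}(\widehat{\tau})$).

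In both cases the surviving numerator arguments form $\mathcal{Z}(\widehat{\tau})$, and since no surviving denominator shares a zero with a surviving numerator, the zeros of $C_{S,m}(\kappa)$ are exactly the zeros of the factors $g(z)$, $z\in\mathcal{Z}(\widehat{\tau})$. Dropping $z=c(1,S)$—the content of the box originally holding $1$, which always lies in $\mathcal{Z}(\widehat{\tau})$ and contributes only the constant $m$—yields the asserted set $\{-m/(c(1,S)-z):z\in\mathcal{Z}(\widehat{\tau}),\,z\neq c(1,S)\}$. I expect the main obstacle to be the bookkeeping in the degenerate case $\widehat{\tau}_{\ell(\tau)}=0$, where the cancellation of the leading factor, the exclusion of $-\ell(\tau)$ from $\mathcal{Z}(\widehat{\tau})$, and the coincidence $n_{k-1}=c(1,S)$ all interlock; checking that they conspire to return the same final set as the generic case is the one delicate point, with Lemma \ref{not=} doing the substantive work everywhere else.
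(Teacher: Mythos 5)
Your proposal is correct and follows essentially the same route as the paper: read the zeros off the factored form in Proposition \ref{CSmprod}, use Lemma \ref{not=} to rule out cancellation between the product's numerator and denominator factors, and treat the leading factor and the degenerate case $\widehat{\tau}_{\ell\left(\tau\right)}=0$ (where $c\left(1,S\right)=-\ell\left(\tau\right)$ is the last entry of the list) separately. Your write-up just makes explicit the bookkeeping the paper leaves implicit, including the observation that the factor indexed by $z=c\left(1,S\right)$ is the nonzero constant $m$.
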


\begin{proof}
None of the numerator factors in the product are cancelled out due to Lemma
\ref{not=}. The only possible cancellation occurs for $\widehat{\tau}%
_{\ell\left(  \tau\right)  }=0$ when $c\left(  1,S\right)  $ is the last entry
in the list $\mathcal{Z}\left(  \widehat{\tau}\right)  $.
\end{proof}

\begin{example}
\label{exJack}Let $N=7,\tau=\left[  5,5,5,4,4,2,2\right]  $ and $\left(
\operatorname{row}\left(  1,S\right)  ,\operatorname{col}\left(  1,S\right)
\right)  =\left(  3,5\right)  $, then $\widehat{\tau}=\left[
5,5,4,4,4,2,2\right]  $. The possible locations where the box containing $1$
could be adjoined to $\widehat{\tau}$ are $\left\{  \left(  1,6\right)
,\left(  3,5\right)  ,\left(  6,3\right)  ,\left(  8,1\right)  \right\}  $ so
that $\mathcal{Z}\left(  \widehat{\tau}\right)  =\left\{  5,2,-3,-7\right\}  $
and%
\[
C_{S,m}\left(  \kappa\right)  =\frac{m\left(  m-3\kappa\right)  \left(
m+5\kappa\right)  \left(  m+9\kappa\right)  }{\left(  m-\kappa\right)  \left(
m+3\kappa\right)  \left(  m+7\kappa\right)  }.
\]
Here is a sketch of $\widehat{\tau}$ marked by $\square$ and the possible
cells for the entry $1$%
\[%
\begin{array}
[c]{cccccc}%
\square & \square & \square & \square & \square & 1\\
\square & \square & \square & \square & \square & \\
\square & \square & \square & \square & 1 & \\
\square & \square & \square & \square &  & \\
\square & \square & \square & \square &  & \\
\square & \square & 1 &  &  & \\
\square & \square &  &  &  & \\
1 &  &  &  &  &
\end{array}
\]

\end{example}

In a later section we examine the relation to singular polynomials of the form
$J_{\alpha,S}$.

\subsection{Macdonald polynomials\label{Mpols}}

Adjoin the parameter $q$. To say that $\left(  q,t\right)  $ is generic means
that $q\neq1,q^{a}t^{b}\neq1$ for $a,b\in\mathbb{Z}$ and $-N\leq b\leq N$.
Besides the operators $T_{i}$ defined in (\ref{defTi}) we introduce (for
$p\in\mathcal{P},S\in\mathcal{Y}\left(  \tau\right)  $)
\begin{align*}
\omega &  :=T_{1}T_{2}\cdots T_{N-1}\text{,}\\
\left(  p\left(  x\right)  \otimes S\right)  w &  :=p\left(  qx_{N}%
,x_{1},\ldots,x_{N-1}\right)  \otimes S\tau\left(  \omega\right)  .
\end{align*}
The Cherednik $\left\{  \xi_{i}\right\}  $ and Dunkl $\left\{  \mathcal{D}%
_{i}\right\}  $ operators, for $1\leq i\leq N$, are defined by
\begin{align*}
\xi_{i} &  :=t^{i-N}T_{i-1}^{-1}\cdots T_{1}^{-1}wT_{N-1}\cdots T_{i},\\
\mathcal{D}_{N} &  :=\left(  1-\xi_{N}\right)  /x_{N},~\mathcal{D}_{i}%
:=\frac{1}{t}T_{i}\mathcal{D}_{i+1}T_{i}\text{.}%
\end{align*}
These definitions were given for the scalar case by Baker and Forrester
\cite{BF1997} and extended to vector-valued polynomials by Luque and the
author \cite{DL2012}. The operators $\left\{  \xi_{i}:1\leq i\leq N\right\}  $
commute pairwise, while the operators $\left\{  \mathcal{D}_{i}:1\leq i\leq
N\right\}  $ commute pairwise and map $\mathcal{P}_{n}\otimes V_{\tau}$ to
$\mathcal{P}_{n-1}\otimes V_{\tau}$ for $n\geq0$. A polynomial $p\in
\mathcal{P}_{\tau}$ is \textit{singular} for some particular value of $\left(
q,t\right)  $ if $p\mathcal{D}_{i}=0$ , evaluated at $\left(  q,t\right)  $,
for all $i$. There is a basis of $\mathcal{P}_{\tau}$ consisting of
homogeneous polynomials each of which is a simultaneous eigenfunction of
$\left\{  \mathcal{\xi}_{i}\right\}  $; these are the nonsymmetric Macdonald
polynomials. For each $\left(  \alpha,S\right)  \in\mathbb{N}_{0}^{N}%
\times\mathcal{Y}\left(  \tau\right)  $ there is the polynomial%
\[
M_{\alpha,S}=q^{q}t^{b}x^{\alpha}\otimes S\tau\left(  R_{\alpha}\right)
+\sum_{\beta\vartriangleleft\alpha}x^{\beta}\otimes v_{\alpha,\beta,S}\left(
q,t\right)  ,
\]
where $v_{\alpha,\beta,S}\left(  q,t\right)  \in V_{\tau}$ and $R_{\alpha
}:=\left(  T_{i_{i}}T_{i_{2}}\cdots T_{i_{m}}\right)  ^{-1}$ where
$\alpha.s_{i_{1}}s_{i_{2}}\cdots s_{i_{m}}=\alpha^{+}$ and there is no shorter
product $s_{j_{1}}s_{j_{2}}\cdots$having this property (that is $m=\#\left\{
\left(  i,j\right)  :i<j,\alpha_{i}<\alpha_{j}\right\}  $), $a,b\in\mathbb{Z}$
(see \cite[p. 19]{Dunkl2019} for the values of $a,b$, which are not needed
here), and
\begin{align*}
M_{\alpha,S}\xi_{i} &  =\widetilde{\zeta}_{\alpha,S}\left(  i\right)
M_{\alpha,S},\\
\widetilde{\zeta}_{\alpha,S}\left(  i\right)   &  =q^{\alpha_{i}}t^{c\left(
r_{\alpha}\left(  i\right)  ,S\right)  },~1\leq i\leq N.
\end{align*}
As before $\left[  \widetilde{\zeta}_{\alpha,S}\left(  i\right)  \right]
_{i=1}^{N}$ is called the \textit{spectral vector }(the tilde indicates
the\textit{ }$\left(  q,t\right)  $-version). We consider the special case
$\alpha=\left(  m,0,\ldots,0\right)  $.

\begin{proposition}
\label{MDi0}(\cite[Prop. 12]{Dunkl2019}) Suppose $\left(  \beta,S\right)
\in\mathbb{N}_{0}^{N}\times\mathcal{Y}\left(  \tau\right)  $ and $\beta_{j}=0$
for $j\geq k$ with some fixed $k>1$ then $M_{\beta,S}\mathcal{D}_{j}=0$ for
all $j\geq k$.
\end{proposition}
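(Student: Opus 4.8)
The plan is to prove the statement by downward induction on $j$, starting from $j=N$ and descending via the defining recursion $\mathcal{D}_j=\frac1t T_j\mathcal{D}_{j+1}T_j$ until $j=k$. Throughout, $\beta$ is fixed with $\beta_i=0$ for $i\geq k$; since the assertion is vacuous when $k>N$, I may assume $k\leq N$, so in particular $\beta_N=0$. Because the inductive step will require the result for all tableaux simultaneously, I would actually prove the stronger statement that $M_{\beta,S'}\mathcal{D}_j=0$ for \emph{every} $S'\in\mathcal{Y}(\tau)$ and every $j\geq k$.

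For the base case $j=N$ I would exploit the eigenvalue equation $M_{\beta,S}\xi_N=\widetilde{\zeta}_{\beta,S}(N)M_{\beta,S}$ together with $\mathcal{D}_N=(1-\xi_N)/x_N$. Since $\beta_N=0$, the rank function gives $r_{\beta}(N)=\#\{i:\beta_i\geq 0\}=N$; moreover in any reverse standard Young tableau the largest entry $N$ occupies the corner $(1,1)$ (entries decrease along each row and column), so $c(N,S)=0$. Hence $\widetilde{\zeta}_{\beta,S}(N)=q^{\beta_N}t^{c(N,S)}=q^{0}t^{0}=1$, which forces $M_{\beta,S}(1-\xi_N)=0$ and therefore $M_{\beta,S}\mathcal{D}_N=0$. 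The identical computation holds for every $S'\in\mathcal{Y}(\tau)$, establishing the base case in the strong form.

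For the inductive step, fix $k\leq j<N$ and assume $M_{\beta,S'}\mathcal{D}_{j+1}=0$ for all $S'$. Because $j,j+1\geq k$ we have $\beta_j=\beta_{j+1}=0$, so $s_j$ fixes the composition label; note also that here $r_{\beta}(j)=j$ and $r_{\beta}(j+1)=j+1$, so the relevant tableau modification is literally the exchange of the entries $j$ and $j+1$. The key structural input I would invoke is that, when $\beta_j=\beta_{j+1}$, the space $\mathrm{span}\{M_{\beta,S'}:S'\in\mathcal{Y}(\tau)\}$ is invariant under $T_j$: concretely $M_{\beta,S}T_j$ is a linear combination of $M_{\beta,S}$ and $M_{\beta,S^{(j)}}$, with coefficients dictated by the four cases of $\tau(T_j)$ and the spectral vector, all carrying the same label $\beta$. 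Granting this, write $M_{\beta,S}\mathcal{D}_j=\frac1t\bigl((M_{\beta,S}T_j)\mathcal{D}_{j+1}\bigr)T_j$; applying the induction hypothesis to each $M_{\beta,S'}$ occurring in $M_{\beta,S}T_j$ gives $(M_{\beta,S}T_j)\mathcal{D}_{j+1}=0$, whence $M_{\beta,S}\mathcal{D}_j=0$. As $S$ was arbitrary, the strong statement propagates and the induction closes.

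The main obstacle is exactly this structural input in the inductive step: the $T_j$-invariance of $\mathrm{span}\{M_{\beta,S'}\}$ when $\beta_j=\beta_{j+1}$. This is where one needs the explicit intertwining relations between the $T_j$ and the Cherednik operators $\xi_i$ from \cite{DL2012}; phrased through the affine Hecke relations, $T_j$ interchanges the $j$ and $j+1$ spectral coordinates while preserving the $q$-exponents, hence preserving $\beta$ and only permuting the contents (i.e.\ the tableau). Once that invariance is cited, everything else reduces to the short eigenvalue computation of the base case, so the proof is immediate.
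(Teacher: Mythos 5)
Your argument is correct: the base case $\widetilde{\zeta}_{\beta,S}(N)=q^{0}t^{c(N,S)}=1$ (since $r_{\beta}(N)=N$ and $N$ sits at $(1,1)$) together with the downward induction through $\mathcal{D}_j=\frac{1}{t}T_j\mathcal{D}_{j+1}T_j$, using the $T_j$-stability of $\mathrm{span}\{M_{\beta,S'}:S'\in\mathcal{Y}(\tau)\}$ when $\beta_j=\beta_{j+1}$ (which is indeed established in \cite{DL2012}), is exactly the standard route. The paper itself gives no proof here, deferring to \cite[Prop.~12]{Dunkl2019}, but that argument and the paper's own analogous downward inductions (e.g., the $\xi_i$ versus $\phi_i$ proposition in Subsection \ref{Isotyp}) proceed in the same way, so your reconstruction matches.
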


Adapting the proof of \cite[Lemma 5]{Dunkl2019} we show (recall (\ref{uDef})
$u\left(  z\right)  =\frac{\left(  t-z\right)  \left(  1-tz\right)  }{\left(
1-z\right)  ^{2}}$):

\begin{proposition}
Let $\alpha=\left(  m,0,\ldots\right)  $, $\alpha^{\prime}=\left(
0,0,\ldots,m\right)  $ and $S\in\mathcal{Y}\left(  \tau\right)  $ then%
\[
M_{\alpha,S}\mathcal{D}_{1}=t^{1-N}\prod\limits_{j=1}^{N-1}u\left(
q^{m}t^{c\left(  1,S\right)  -c\left(  j+1,S\right)  }\right)  M_{\alpha
^{\prime},S}\mathcal{D}_{N}T_{N-1}\cdots T_{1}\text{.}%
\]

\end{proposition}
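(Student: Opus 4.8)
The plan is to move $\mathcal{D}_1$ to the far end of the variable range and reduce everything to $\mathcal{D}_N$. First I would iterate the defining recursion $\mathcal{D}_i=\tfrac1t T_i\mathcal{D}_{i+1}T_i$, which telescopes to
\[
\mathcal{D}_1=t^{1-N}\,T_1T_2\cdots T_{N-1}\,\mathcal{D}_N\,T_{N-1}\cdots T_2T_1=t^{1-N}\,\omega\,\mathcal{D}_N\,T_{N-1}\cdots T_1 .
\]
Hence $M_{\alpha,S}\mathcal{D}_1=t^{1-N}\,(M_{\alpha,S}\omega)\,\mathcal{D}_N\,T_{N-1}\cdots T_1$, and the whole statement reduces to computing $M_{\alpha,S}\omega$ up to terms lying in the kernel of $\mathcal{D}_N$.

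The core is to expand $M_{\alpha,S}\omega=M_{\alpha,S}T_1T_2\cdots T_{N-1}$ one factor at a time, tracking which Macdonald polynomials appear. At the $j$-th step the composition carrying the nonzero part is $\gamma=(0,\dots,0,m,0,\dots,0)$ with $m$ in coordinate $j$, so $\gamma_j=m>0=\gamma_{j+1}$ and $T_j$ acts in the non-equal case. There $T_j$ preserves the two-dimensional span of $M_{\gamma,S}$ and of the polynomial whose composition has $m$ shifted to coordinate $j+1$; the tableau label $S$ cannot change, since the two spectral vectors are obtained from one another by interchanging the (unequal, because of the factor $q^{m}$) entries in slots $j,j+1$. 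Reading off the off-diagonal entry of this $2\times2$ action (the vector-valued Hecke formula of \cite{DL2012}) gives the coefficient $u(z_j)$ with
\[
z_j=\widetilde{\zeta}_{\gamma,S}(j)/\widetilde{\zeta}_{\gamma,S}(j+1)=q^{m}t^{\,c(1,S)-c(j+1,S)},
\]
because $r_\gamma(j)=1$ and $r_\gamma(j+1)=j+1$. Following the shifted term through all $N-1$ factors transports $m$ from coordinate $1$ to coordinate $N$ and multiplies out exactly $\prod_{j=1}^{N-1}u(q^{m}t^{c(1,S)-c(j+1,S)})$ in front of $M_{\alpha',S}$.

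Finally I would show every other contribution dies under $\mathcal{D}_N$. Each time the diagonal term $M_{\gamma,S}$ is retained at step $j$, its remaining operators $T_{j+1},\dots,T_{N-1}$ all act in the equal case (the coordinates $j+1,\dots,N$ of $\gamma$ are zero), so they merely recombine tableaux and fix the composition; in particular the last coordinate stays $0$. Thus every Macdonald polynomial occurring in $M_{\alpha,S}\omega$ apart from $M_{\alpha',S}$ has vanishing last coordinate, and by Proposition \ref{MDi0} is annihilated by $\mathcal{D}_N$. This yields $M_{\alpha,S}\omega\,\mathcal{D}_N=\big(\prod_{j=1}^{N-1}u(q^{m}t^{c(1,S)-c(j+1,S)})\big)M_{\alpha',S}\,\mathcal{D}_N$, which substituted into the factorization proves the proposition. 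The step I expect to be delicate is pinning the off-diagonal coefficient to be precisely $u(z_j)$: this is where the explicit normalizing prefactor $q^{a}t^{b}$ of $M_{\alpha',S}$ enters, and it must be carried through the telescoping to rule out stray powers of $q$ and $t$.
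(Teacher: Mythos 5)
Your argument is correct and is essentially the one the paper relies on: the paper gives no proof beyond citing \cite{Dunkl2019}, and the cited argument is precisely your telescoping $\mathcal{D}_{1}=t^{1-N}T_{1}\cdots T_{N-1}\mathcal{D}_{N}T_{N-1}\cdots T_{1}$ followed by tracking the Yang--Baxter steps that carry $m$ from slot $1$ to slot $N$ and using Proposition \ref{MDi0} to kill every term whose last coordinate stays $0$. The point you flag as delicate does close: with the normalization of \cite{DL2012} the step toward the dominant composition carries coefficient $1$ on $T_{j}$, so the quadratic relation $(T_{j}+1)(T_{j}-t)=0$ forces the reverse off-diagonal coefficient to be $-(a_{j}+1)(a_{j}-t)=u(z_{j})$ with $z_{j}=q^{m}t^{c(1,S)-c(j+1,S)}$, exactly as you predict.
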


The other ingredient is the affine step (from the Yang-Baxter graph, see
\cite{DL2012}, \cite[(3.14)]{Dunkl2019}): for $\beta\in\mathbb{N}_{0}^{N}$ set
$\beta\Phi:=\left(  \beta_{2},\beta_{3},\ldots,\beta_{N},\beta_{1}+1\right)  $
then $M_{\beta\Phi,S}=x_{N}\left(  M_{\beta,S}w\right)  .$ The spectral vector
of $\beta\Phi$ is $\left[  \widetilde{\zeta}_{\beta,S}\left(  2\right)
,\ldots,\widetilde{\zeta}_{\beta,S}\left(  N\right)  ,q\widetilde{\zeta
}_{\beta,S}\left(  1\right)  \right]  $. Observe $\widehat{\alpha}\Phi
=\alpha^{\prime}$ for $\widehat{\alpha}=\left(  m-1,0,\ldots\right)  $. By
definition%
\begin{align*}
M_{\alpha^{\prime},S}\mathcal{D}_{N}  &  =\frac{1}{x_{N}}\left\{
M_{\alpha^{\prime},S}\left(  1-\xi_{N}\right)  \right\}  =\frac{1}{x_{N}%
}\left(  1-\widetilde{\zeta}_{\alpha^{\prime},S}\left(  N\right)  \right)
M_{\alpha^{\prime},S}\\
&  =\left(  1-\widetilde{\zeta}_{\alpha^{\prime},S}\left(  N\right)  \right)
M_{\widehat{\alpha},S}w=\left(  1-q\widetilde{\zeta}_{\widehat{\alpha}%
,S}\left(  1\right)  \right)  M_{\widehat{\alpha},S}w.
\end{align*}
Furthermore $\left(  1-q\widetilde{\zeta}_{\widehat{\alpha},S}\left(
1\right)  \right)  =\left(  1-q^{m}t^{c\left(  1,S\right)  }\right)  $ and
$wT_{N-1}\cdots T_{1}=t^{N-1}\xi_{1}$ so that $M_{\widehat{\alpha},S}\xi
_{1}=q^{m-1}t^{c\left(  1,S\right)  }M_{\widehat{\alpha},S}\ $.

\begin{proposition}
Let $\alpha=\left(  m,0,\ldots\right)  $, $\widehat{\alpha}=\left(
m-1,0,\ldots,0\right)  $ and $S\in\mathcal{Y}\left(  \tau\right)  $ then%
\begin{equation}
M_{\alpha,S}\mathcal{D}_{1}=q^{m-1}t^{c\left(  1,S\right)  }\left(
1-q^{m}t^{c\left(  1,S\right)  }\right)  \prod\limits_{j=2}^{N}u\left(
q^{m}t^{c\left(  1,S\right)  -c\left(  j,S\right)  }\right)  M_{\widehat
{\alpha},S}. \label{MDf1}%
\end{equation}

\end{proposition}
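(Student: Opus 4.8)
The plan is to assemble the two ingredients that have just been established, since the proof is essentially a bookkeeping of scalars. The starting point is the preceding proposition, which already rewrites $M_{\alpha,S}\mathcal{D}_1$ as $t^{1-N}\prod_{j=1}^{N-1}u\!\left(q^{m}t^{c(1,S)-c(j+1,S)}\right)M_{\alpha',S}\mathcal{D}_N T_{N-1}\cdots T_1$. Thus the whole task reduces to evaluating the operator string $\mathcal{D}_N T_{N-1}\cdots T_1$ applied to $M_{\alpha',S}$ and then simplifying the resulting scalar.

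First I would substitute the chain of equalities derived immediately before the statement. Using $\mathcal{D}_N=(1-\xi_N)/x_N$, the eigenvalue $M_{\alpha',S}\xi_N=\widetilde{\zeta}_{\alpha',S}(N)M_{\alpha',S}$, and the affine-step identity $M_{\alpha',S}=M_{\widehat{\alpha}\Phi,S}=x_N\!\left(M_{\widehat{\alpha},S}w\right)$, the $x_N$ cancels and one obtains $M_{\alpha',S}\mathcal{D}_N=\bigl(1-q^{m}t^{c(1,S)}\bigr)M_{\widehat{\alpha},S}w$, where the identification $\widetilde{\zeta}_{\alpha',S}(N)=q\widetilde{\zeta}_{\widehat{\alpha},S}(1)=q^{m}t^{c(1,S)}$ comes from the stated transformation rule for spectral vectors under $\Phi$.

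Next I would post-multiply by $T_{N-1}\cdots T_1$ and invoke the identity $wT_{N-1}\cdots T_1=t^{N-1}\xi_1$ to turn the composite into a scalar multiple of $\xi_1$. The Cherednik eigenvalue equation $M_{\widehat{\alpha},S}\xi_1=q^{m-1}t^{c(1,S)}M_{\widehat{\alpha},S}$ then collapses the entire string to the scalar $\bigl(1-q^{m}t^{c(1,S)}\bigr)t^{N-1}q^{m-1}t^{c(1,S)}$ times $M_{\widehat{\alpha},S}$.

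Finally I would gather the constants. The prefactor $t^{1-N}$ cancels the $t^{N-1}$ produced by $wT_{N-1}\cdots T_1$, and the shift $j+1\mapsto j$ rewrites $\prod_{j=1}^{N-1}u\!\left(q^{m}t^{c(1,S)-c(j+1,S)}\right)$ as $\prod_{j=2}^{N}u\!\left(q^{m}t^{c(1,S)-c(j,S)}\right)$, which yields exactly the claimed formula. Since every ingredient is already in hand, there is no genuine obstacle; the only point that requires care is tracking the power of $t$ and aligning the reindexing of the $u$-product so that the $t^{\pm(N-1)}$ factors cancel cleanly and the surviving scalar is $q^{m-1}t^{c(1,S)}\bigl(1-q^{m}t^{c(1,S)}\bigr)$.
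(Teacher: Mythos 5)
Your proposal is correct and follows exactly the route the paper takes: the paper's "proof" is precisely the displayed chain of identities preceding the statement (the previous proposition for $M_{\alpha,S}\mathcal{D}_1$, the affine step $\widehat{\alpha}\Phi=\alpha'$ giving $M_{\alpha',S}\mathcal{D}_N=\left(1-q^{m}t^{c\left(1,S\right)}\right)M_{\widehat{\alpha},S}w$, and then $wT_{N-1}\cdots T_{1}=t^{N-1}\xi_{1}$ with the eigenvalue $q^{m-1}t^{c\left(1,S\right)}$), followed by the same cancellation of $t^{1-N}$ against $t^{N-1}$ and the same reindexing of the $u$-product. No gaps.
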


This is very similar to the Jack case (\ref{JDf1}) and the same telescoping
argument will be used. Denote the factor of $M_{\widehat{\alpha},S}$ in
(\ref{MDf1}) by $C_{S,m}\left(  q,t\right)  $. Set $g\left(  i\right)
=1-q^{m}t^{c\left(  1,S\right)  -i}$ for $i\in\mathbb{Z}$ then%
\[
u\left(  q^{m}t^{c\left(  1,S\right)  -c\left(  j,S\right)  }\right)
=t\frac{g\left(  c\left(  j,S\right)  -1\right)  g\left(  c\left(  j,S\right)
+1\right)  }{g\left(  c\left(  j,S\right)  \right)  ^{2}}.
\]
With the same notation for $\widehat{\tau}$ as in Definition \ref{tau_hat}%
\begin{align*}
C_{S,m}\left(  q,t\right)   &  =q^{m-1}t^{c\left(  1,S\right)  +N-1}\left(
1-q^{m}t^{c\left(  1,S\right)  }\right)  \prod_{i=1}^{\ell\left(  \tau\right)
}\frac{g\left(  -i\right)  }{g\left(  1-i\right)  }\frac{g\left(
\widehat{\tau}_{i}-i+1\right)  }{g\left(  \widehat{\tau}_{i}-i\right)  }\\
&  =q^{m-1}t^{c\left(  1,S\right)  +N-1}\left(  1-q^{m}t^{c\left(  1,S\right)
}\right)  \frac{g\left(  -\ell\left(  \tau\right)  \right)  }{g\left(
0\right)  }\prod_{i=1}^{\ell\left(  \tau\right)  }\frac{g\left(  \widehat
{\tau}_{i}-i+1\right)  }{g\left(  \widehat{\tau}_{i}-i\right)  }\\
&  =q^{m-1}t^{c\left(  1,S\right)  +N-1}\left(  1-q^{m}t^{c\left(  1,S\right)
+\ell\left(  \tau\right)  }\right)  \prod_{i=1}^{\ell\left(  \tau\right)
}\frac{g\left(  \widehat{\tau}_{i}-i+1\right)  }{g\left(  \widehat{\tau}%
_{i}-i\right)  }.
\end{align*}
The same computational scheme as in Proposition \ref{CSmprod} proves the following:

\begin{proposition}
For $\widehat{\tau}$ and $\mathcal{I}\left(  \widehat{\tau}\right)  $ as in
Definitions \ref{tau_hat} and \ref{T_tau}%
\[
C_{S,m}\left(  q,t\right)  =q^{m-1}t^{c\left(  1,S\right)  +N-1}\left(
1-q^{m}t^{c\left(  1,S\right)  +\ell\left(  \tau\right)  }\right)  \prod
_{s=1}^{k-1}\frac{1-q^{m}t^{c\left(  1,S\right)  -\left(  \widehat{\tau
}_{i_{s}}+1-i_{s}\right)  }}{1-q^{m}t^{c\left(  1,S\right)  -\left(
\widehat{\tau}_{i_{s}}+1-i_{s+1}\right)  }},
\]
where $i_{k}=\ell\left(  \tau\right)  +1$.
\end{proposition}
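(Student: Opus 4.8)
The plan is to reproduce, in multiplicative form, the telescoping computation that established Proposition~\ref{CSmprod}, with the linear factor $m+\kappa(c(1,S)-i)$ of the Jack case replaced throughout by $g(i)=1-q^{m}t^{c(1,S)-i}$; I will treat every step as an identity of rational functions in $q,t$, so that the telescoping identities apply formally and no vanishing-denominator question arises. The first step is the factorization
\[
u\bigl(q^{m}t^{c(1,S)-c(j,S)}\bigr)=t\,\frac{g(c(j,S)-1)\,g(c(j,S)+1)}{g(c(j,S))^{2}},
\]
obtained from $u(z)=(t-z)(1-tz)/(1-z)^{2}$ by setting $z=q^{m}t^{c(1,S)-c(j,S)}$ and reading off $g(c(j,S))=1-z$, $g(c(j,S)-1)=1-tz$, $g(c(j,S)+1)=1-z/t$. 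Substituting this into the factor $C_{S,m}(q,t)$ extracted from (\ref{MDf1}) turns $\prod_{j=2}^{N}u(\cdots)$ into $t^{N-1}\prod_{j=2}^{N}g(c(j,S)-1)g(c(j,S)+1)/g(c(j,S))^{2}$, the exact multiplicative analogue of the Jack expression behind (\ref{CSk1}).

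Next I would regroup the product over $j=2,\ldots,N$ by rows of $\widehat{S}$: the boxes other than the one holding $1$ fill the diagram $\widehat{\tau}$, and along row $i$ the contents $c(j,S)$ run through the integers $1-i,\ldots,\widehat{\tau}_{i}-i$. The elementary identity $\prod_{j=a}^{b}g(j+1)g(j-1)/g(j)^{2}=g(a-1)g(b+1)/\bigl(g(a)g(b)\bigr)$, already recorded in the excerpt, applied with $a=1-i$ and $b=\widehat{\tau}_{i}-i$ gives the row factor $g(-i)g(\widehat{\tau}_{i}-i+1)/\bigl(g(1-i)g(\widehat{\tau}_{i}-i)\bigr)$. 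Taking the product over $i=1,\ldots,\ell(\tau)$, the left halves collapse as $\prod_{i}g(-i)/g(1-i)=g(-\ell(\tau))/g(0)$; since $g(0)=1-q^{m}t^{c(1,S)}$ cancels the prefactor of the same name while $g(-\ell(\tau))=1-q^{m}t^{c(1,S)+\ell(\tau)}$, this recovers the intermediate display immediately preceding the statement, namely $C_{S,m}(q,t)=q^{m-1}t^{c(1,S)+N-1}(1-q^{m}t^{c(1,S)+\ell(\tau)})\prod_{i=1}^{\ell(\tau)}g(\widehat{\tau}_{i}-i+1)/g(\widehat{\tau}_{i}-i)$.

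The final and most delicate step is the second telescoping, now over the blocks encoded by $\mathcal{I}(\widehat{\tau})=[i_{1},\ldots,i_{k}]$. On each block $i_{s}\le j<i_{s+1}$ the value $\widehat{\tau}_{j}$ is constant, equal to $\widehat{\tau}_{i_{s}}$ by Definition~\ref{T_tau}, so the surviving row factors collapse by a telescoping in $j$:
\[
\prod_{j=i_{s}}^{i_{s+1}-1}\frac{g(\widehat{\tau}_{i_{s}}-j+1)}{g(\widehat{\tau}_{i_{s}}-j)}=\frac{g(\widehat{\tau}_{i_{s}}-i_{s}+1)}{g(\widehat{\tau}_{i_{s}}-i_{s+1}+1)}=\frac{1-q^{m}t^{c(1,S)-(\widehat{\tau}_{i_{s}}+1-i_{s})}}{1-q^{m}t^{c(1,S)-(\widehat{\tau}_{i_{s}}+1-i_{s+1})}},
\]
which is precisely the $s$-th factor of the asserted product. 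Multiplying these over $s=1,\ldots,k-1$ against the surviving prefactor $q^{m-1}t^{c(1,S)+N-1}(1-q^{m}t^{c(1,S)+\ell(\tau)})$ yields the claim. The part needing the most care is purely combinatorial bookkeeping: one must check that the half-open blocks $[i_{s},i_{s+1})$ partition $\{1,\ldots,\ell(\tau)\}$ and that the convention $i_{k}=\ell(\tau)+1$ makes the last block terminate at $j=\ell(\tau)$, so that the two telescopings splice together without boundary leftover; this is exactly the bookkeeping already validated in the Jack proof of Proposition~\ref{CSmprod}.
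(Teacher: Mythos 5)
Your proposal is correct and follows essentially the same route as the paper: the factorization of $u$ into the ratio $t\,g(c(j,S)-1)g(c(j,S)+1)/g(c(j,S))^{2}$, the row-by-row telescoping producing the intermediate form with $\prod_{i}g(\widehat{\tau}_{i}-i+1)/g(\widehat{\tau}_{i}-i)$ after cancelling $g(0)$ against the prefactor, and the second block telescoping over $\mathcal{I}(\widehat{\tau})$ are exactly the paper's computation (which it abbreviates by appeal to Proposition \ref{CSmprod}). Your version merely writes out explicitly the steps the paper leaves implicit.
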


If $\widehat{\tau}_{\ell\left(  \tau\right)  }=0$ then the entry at $\left(
\ell\left(  \tau\right)  ,1\right)  $ is $1$, $c\left(  1,S\right)
=1-\ell\left(  \tau\right)  $, $i_{k-1}=\ell\left(  \tau\right)  $ and the
last factor in the product (for $s=k-1$) equals $\dfrac{1-q^{m}}{1-q^{m}t}$,
thus cancelling out the leading factor $1-q^{m}t^{c\left(  1,S\right)
+\ell\left(  \tau\right)  }=1-q^{m}t$.

\begin{proposition}
\label{Cqtzeros}The set of zeros of $C_{m,S}\left(  q,t\right)  $
is\newline\ $\left\{  q^{m}t^{c\left(  1,S\right)  -z}=1:z\in\mathcal{Z}%
\left(  \widehat{\tau}\right)  ,z\neq c\left(  1,S\right)  \right\}  $.
\end{proposition}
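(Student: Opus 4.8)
The plan is to follow the proof of the Jack analogue, Proposition \ref{Ckzeros}, with the linear factors $m+\kappa(c(1,S)-z)$ replaced by the exponential factors $1-q^{m}t^{c(1,S)-z}$. First I would set aside the monomial prefactor $q^{m-1}t^{c(1,S)+N-1}$ supplied by the preceding proposition: since $q$ and $t$ are nonzero it never vanishes, so every zero of $C_{S,m}(q,t)$ comes from a surviving factor of the ratio. Reading off that formula, the numerator carries the factor indexed by $z=-\ell(\tau)$ (the leading factor $1-q^{m}t^{c(1,S)+\ell(\tau)}$) together with the factors indexed by $z=\widehat{\tau}_{i_{s}}+1-i_{s}$ for $1\leq s\leq k-1$, while the denominator carries the factors indexed by $z=\widehat{\tau}_{i_{s}}+1-i_{s+1}$; by Definition \ref{T_tau} these numerator indices are exactly the elements of $\mathcal{Z}(\widehat{\tau})$, with $-\ell(\tau)$ present precisely when $\widehat{\tau}_{\ell(\tau)}\geq1$.

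The crux is to check that no numerator factor of the product is cancelled by a denominator factor. The point here, not needed in the Jack case, is that $1-q^{m}t^{c(1,S)-z}$ and $1-q^{m}t^{c(1,S)-z'}$ agree over the coefficient field only when $t^{z}=t^{z'}$, and this forces $z=z'$ because $t$ is transcendental (or not a root of unity). Hence a cancellation would require $\widehat{\tau}_{i_{a}}+1-i_{a}=\widehat{\tau}_{i_{b}}+1-i_{b+1}$, that is $\widehat{\tau}_{i_{a}}-i_{a}=\widehat{\tau}_{i_{b}}-i_{b+1}$, which Lemma \ref{not=} forbids; so each product-numerator factor survives and vanishes exactly on $q^{m}t^{c(1,S)-z}=1$. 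For the leading factor I would argue separately: when $\widehat{\tau}_{\ell(\tau)}\geq1$ its index $-\ell(\tau)$ is smaller than every denominator index (the smallest being $\widehat{\tau}_{i_{k-1}}-\ell(\tau)$), so it is uncancelled and contributes the zero for $z=-\ell(\tau)\in\mathcal{Z}(\widehat{\tau})$; when $\widehat{\tau}_{\ell(\tau)}=0$ the computation recorded before the statement shows the leading factor equals $1-q^{m}t$ and is cancelled by the $s=k-1$ denominator factor, consistent with $-\ell(\tau)\notin\mathcal{Z}(\widehat{\tau})$.

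Finally I would account for the excluded index $z=c(1,S)$. Because $S$ is a genuine RSYT, the box vacated in forming $\widehat{S}$ is always an addable cell of $\widehat{S}$, so $c(1,S)$ always lies in $\mathcal{Z}(\widehat{\tau})$, and its factor is $1-q^{m}t^{0}=1-q^{m}$. Here a genuine difference from the Jack case appears: there the factor at $z=c(1,S)$ is the nonzero constant $m$ and contributes no zero at all, whereas $1-q^{m}$ does vanish, on the degenerate locus $q^{m}=1$ which is independent of $t$. The statement sets this relation aside, which is exactly what the restriction $z\neq c(1,S)$ encodes. I expect this to be the main obstacle: one must justify excluding $q^{m}=1$ as not a bona fide singular value and then check, uniformly across the cases $\widehat{\tau}_{\ell(\tau)}=0$ and $\widehat{\tau}_{\ell(\tau)}\geq1$, that the remaining surviving numerator factors correspond bijectively to $\mathcal{Z}(\widehat{\tau})\setminus\{c(1,S)\}$; the rest is the routine telescoping already set up in Proposition \ref{CSmprod}.
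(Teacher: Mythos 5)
Your argument is essentially the paper's own proof: the paper likewise invokes Lemma \ref{not=} to rule out cancellation among the product factors and observes that the only possible cancellation is the leading factor against the $s=k-1$ denominator when $\widehat{\tau}_{\ell\left(\tau\right)}=0$. Your closing observation that the surviving factor indexed by $z=c\left(1,S\right)$ is $1-q^{m}$, which (unlike the nonzero constant $m$ in the Jack case) does vanish on the degenerate locus $q^{m}=1$, is a point the paper's two-line proof passes over in silence; the exclusion $z\neq c\left(1,S\right)$ in the statement is indeed best read as setting that locus aside, exactly as you say.
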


\begin{proof}
None of the numerator factors in the product are cancelled out due to Lemma
\ref{not=}. The only possible cancellation occurs for $\widehat{\tau}%
_{\ell\left(  \tau\right)  }=0$ when $c\left(  1,S\right)  $ is the last entry
in the list $\mathcal{Z}\left(  \widehat{\tau}\right)  $.
\end{proof}

\begin{example}
Let $N=7,\tau=\left[  5,5,4,4,4,3,2\right]  $ and $\left(  \operatorname{row}%
\left(  1,S\right)  ,\operatorname{col}\left(  1,S\right)  \right)  =\left(
6,3\right)  $, then $\widehat{\tau}=\left[  5,5,4,4,4,2,2\right]  $. This is
the same $\widehat{\tau}$ as in Example \ref{exJack}, and $\mathcal{Z}\left(
\widehat{\tau}\right)  =\left\{  5,2,-3,-7\right\}  $. The same diagram
applies here. Then%
\[
C_{S,m}\left(  q,t\right)  =q^{m-1}t^{4}\left(  1-q^{m}\right)  \frac{\left(
1-q^{m}t^{-8}\right)  \left(  1-q^{m}t^{-5}\right)  \left(  1-q^{m}%
t^{4}\right)  }{\left(  1-q^{m}t^{-6}\right)  \left(  1-q^{m}t^{-2}\right)
\left(  1-q^{m}t^{2}\right)  }.
\]

\end{example}

In the next section we will see under what conditions $M_{\alpha,S}$ is singular.

\section{Singular Polynomials\label{SingPol}}

For $\alpha=\left(  m,0,\ldots\right)  \in\mathbb{N}_{0}^{N,+},\widehat
{\alpha}=\left(  m-1,0,\ldots\right)  $ and $S\in\mathcal{Y}\left(
\tau\right)  $ we have shown%
\begin{align*}
J_{\alpha,S}\mathcal{D}_{i} &  =0,M_{\alpha,S}\mathcal{D}_{i}=0,~2\leq i\leq
N;\\
J_{\alpha,S}\mathcal{D}_{1} &  =C_{S,m}\left(  \kappa\right)  J_{\widehat
{\alpha},S},\\
M_{\alpha,S}\mathcal{D}_{1} &  =C_{S,m}\left(  q,t\right)  M_{\widehat{\alpha
},S},
\end{align*}
and we determined the zeros of $C_{S,m}\left(  \kappa\right)  $ and
$C_{S,m}\left(  q,t\right)  .$But not all zeros lead to singular polynomials
because, in general the coefficients of $J_{\beta,S}$ (with respect to the
monomial basis $\left\{  x^{\gamma}\otimes S^{\prime}\right\}  $) have
denominators of the form $a+b\kappa$ and the coefficients of $M_{\beta,S}$
have denominators of the form $1-q^{a}t^{b}$ where $a,b\in\mathbb{Z}$ and
$\left\vert b\right\vert \leq N$. Thus to be able to substitute $\kappa
=\kappa_{0}$, a zero of $C_{S,m}\left(  \kappa\right)  $, or $\left(
q,t\right)  =\left(  q_{0},t_{0}\right)  $, a zero of $C_{S,m}\left(
q,t\right)  $, in equations (\ref{JDf1}) and (\ref{MDf1}) to conclude that
$J_{\alpha,S}$ or $M_{\alpha,S}$ are singular it is necessary to show that
neither $J_{\alpha,S}$ or $J_{\widehat{\alpha},S}$ have a pole at
$\kappa=\kappa_{0}$; the analogous requirement applies to $M_{\alpha,S}$ and
$M_{\widehat{\alpha},S}$. From the triangularity of $J_{\beta,S}$ and
$M_{\beta,S}$ with respect to the monomial basis we can deduce that%
\begin{align*}
x^{\lambda}\otimes S &  =J_{\lambda,S}+\sum_{\gamma\vartriangleleft
\lambda,S^{\prime}\in\mathcal{Y}\left(  \tau\right)  }b_{\lambda
,\gamma,S,S^{\prime}}\left(  \kappa\right)  J_{\gamma,S^{\prime}},\\
x^{\lambda}\otimes S &  =cM_{\lambda,S}+\sum_{\gamma\vartriangleleft
\lambda,S^{\prime}\in\mathcal{Y}\left(  \tau\right)  }b_{\lambda
,\gamma,S,S^{\prime}}\left(  q,t\right)  M_{\gamma,S^{\prime}},
\end{align*}
where $\lambda\in\mathbb{N}_{0}^{N,+}$, the coefficients $b_{\lambda
,\gamma,S,S^{\prime}}\left(  \kappa\right)  ,b_{\lambda,\gamma,S,S^{\prime}%
}\left(  q,t\right)  $ are rational functions of $\kappa,\left(  q,t\right)  $
respectively and $c=q^{a}t^{b}$ for some integers $a,b$. If one can show that
for each $\left(  \gamma,S^{\prime}\right)  $ with $\gamma\vartriangleleft
\lambda$ that the spectral vector is distinct from that of $\left(
\lambda,S\right)  $, that is, $\left[  \zeta_{\gamma,S^{\prime}}\left(
i\right)  \right]  _{i=1}^{N}\neq\left[  \zeta_{\lambda,S}\left(  i\right)
\right]  _{i=1}^{N}$ when evaluated at the specific values of $\kappa$ or
$\left(  q,t\right)  $ (with $\widetilde{\zeta}$) then $J_{\lambda,S}$,
respectively $M_{\lambda,S}$, do not have a pole there. The following is a
device for analyzing possibly coincident spectral vectors.

\begin{definition}
Let $\left(  \beta,S\right)  ,\left(  \gamma,S^{\prime}\right)  \in
\mathbb{N}_{0}^{N}\times\mathcal{Y}\left(  \tau\right)  $ such that
$\beta\vartriangleright\gamma$, and let $m,n\in\mathbb{Z}$ with $m\geq
1,n\neq0$. Then $\left[  \left(  \beta,S\right)  ,\left(  \gamma,S^{\prime
}\right)  \right]  $ is an $\left(  m,n\right)  $-critical pair if there is
$v\in\mathbb{Z}^{N}$ such that $\beta_{i}-\gamma_{i}=mv_{i}$ and $c\left(
r_{\beta}\left(  i\right)  ,S\right)  -c\left(  r_{\gamma}\left(  i\right)
,S^{\prime}\right)  =nv_{i}$ for $1\leq i\leq N$.
\end{definition}

\begin{lemma}
\label{Cpair}Let $\left(  \beta,S\right)  ,\left(  \gamma,S^{\prime}\right)
\in\mathbb{N}_{0}^{N}\times\mathcal{Y}\left(  \tau\right)  $ such that
$\beta\vartriangleright\gamma$ and $\zeta_{\beta,S}\left(  i\right)
=\zeta_{\gamma,S^{\prime}}\left(  i\right)  $ for all $i$ when $\kappa
=-\frac{m}{n}$, with $\gcd\left(  m,n\right)  =1$, then $\left[  \left(
\beta,S\right)  ,\left(  \gamma,S^{\prime}\right)  \right]  $ is an $\left(
m,n\right)  $-critical pair.
\end{lemma}

\begin{proof}
By hypothesis $\left(  1+\beta_{i}-\frac{m}{n}c\left(  r_{\beta}\left(
i,S\right)  \right)  \right)  =\left(  1+\gamma_{i}-\frac{m}{n}c\left(
r_{\gamma}\left(  i,S^{\prime}\right)  \right)  \right)  $ for $1\leq i\leq
N$; thus%
\begin{align*}
\beta_{i}-\gamma_{i}  &  =\frac{m}{n}\left(  r_{\beta}\left(  i,S\right)
-c\left(  r_{\gamma}\left(  i,S^{\prime}\right)  \right)  \right)  ,\\
n\left(  \beta_{i}-\gamma_{i}\right)   &  =m\left(  r_{\beta}\left(
i,S\right)  -c\left(  r_{\gamma}\left(  i,S^{\prime}\right)  \right)  \right)
.
\end{align*}
From $\gcd\left(  m,n\right)  =1$ it follows that $\beta_{i}-\gamma_{i}%
=mv_{i}$ for some $v_{i}\in\mathbb{Z}$ and thus $r_{\beta}\left(  i,S\right)
-c\left(  r_{\gamma}\left(  i,S^{\prime}\right)  \right)  =nv_{i}$.
\end{proof}

Now we specialize to $\alpha=\left(  m,0,\ldots\right)  $ as in Subsection
\ref{JPols} and $n$ satisfying $C_{S,m}\left(  -\frac{m}{n}\right)  =0$. By
Proposition \ref{Ckzeros} this is equivalent to $n=c\left(  1,S\right)  -z$
with $z\in\mathcal{Z}\left(  \widehat{\tau}\right)  $.

\begin{proposition}
\label{noCpair}There are no $\left(  m,n\right)  $-critical pairs $\left[
\left(  \alpha,S\right)  ,\left(  \gamma,S^{\prime}\right)  \right]  $.
\end{proposition}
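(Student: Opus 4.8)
The plan is to assume, for contradiction, that such a critical pair $[(\alpha,S),(\gamma,S')]$ exists and to mine enough structure from the definition to clash with the description of $\mathcal{Z}(\widehat{\tau})$. Write $a_{i}:=c(i,S)$ and $b_{i}:=c(i,S')$. Since $\alpha=(m,0,\ldots,0)=\alpha^{+}$ we have $r_{\alpha}(i)=i$, so the relation $\alpha_{i}-\gamma_{i}=mv_{i}$ gives $\gamma_{1}=m(1-v_{1})$ and $\gamma_{i}=-mv_{i}$ for $i\geq2$; hence every part of $\gamma$ is a nonnegative multiple of $m$. Because $|\gamma|=|\alpha|=m$, exactly one part of $\gamma$ equals $m$ and the rest vanish, and because $\alpha\vartriangleright\gamma$ forces $\gamma\neq\alpha$, that part lies in some position $p\geq2$. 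Thus $\gamma=me_{p}$ (the composition with $m$ in slot $p$ and $0$ elsewhere) and $v=e_{1}-e_{p}$. I expect this reduction to be automatic, the only subtle point being that the nonnegativity and the fixed weight $m$ leave no room for more than one nonzero part.

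Next I would compute the rank permutation: one checks $r_{\gamma}(p)=1$, $r_{\gamma}(i)=i+1$ for $1\leq i<p$, and $r_{\gamma}(i)=i$ for $i>p$. Substituting into $c(i,S)-c(r_{\gamma}(i),S')=nv_{i}$ and reindexing by $j=r_{\gamma}(i)$ expresses the content vector of $S'$ through that of $S$: $b_{1}=a_{p}+n$, $b_{2}=a_{1}-n$, $b_{j}=a_{j-1}$ for $3\leq j\leq p$, and $b_{j}=a_{j}$ for $j>p$. Since $[a_{i}]$ and $[b_{i}]$ are content vectors of RSYT of the same shape $\tau$, they are rearrangements of the same multiset of box–contents. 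Cancelling the shared entries $a_{2},\ldots,a_{p-1},a_{p+1},\ldots,a_{N}$ leaves the multiset identity $\{a_{p}+n,a_{1}-n\}=\{a_{1},a_{p}\}$. The branch $a_{p}+n=a_{p}$ forces $n=0$, which is excluded, so $a_{p}+n=a_{1}$, i.e. $a_{p}=a_{1}-n$. As $n$ was chosen with $n=c(1,S)-z=a_{1}-z$ (Proposition \ref{Ckzeros}), this yields $a_{p}=z$, and therefore $b_{1}=a_{p}+n=a_{1}=c(1,S)$ and $b_{2}=a_{1}-n=z$.

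The contradiction now comes from locating the entries $1$ and $2$ of $S'$. Since $b_{1}=c(1,S)$ and the removable corners of $\tau$ have pairwise distinct contents, entry $1$ of $S'$ occupies the very box $\left(\operatorname{row}(1,S),\operatorname{col}(1,S)\right)$ used for entry $1$ of $S$; deleting it shows that the entries $\{2,\ldots,N\}$ of $S'$ fill the diagram $\widehat{\tau}$. In that sub-tableau $2$ is the smallest entry, so it sits at a removable corner of $\widehat{\tau}$, and its content is $b_{2}=z$; thus $z$ is the content of a \emph{removable} corner of $\widehat{\tau}$. On the other hand, by the discussion preceding Definition \ref{T_tau}, every element of $\mathcal{Z}(\widehat{\tau})$ is the content of an \emph{addable} corner of $\widehat{\tau}$. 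The step I expect to carry the weight is the elementary but essential fact that, for any partition, the contents of addable and of removable corners are disjoint: they strictly decrease and alternate along the boundary, and a short inspection of the equation $\mu_{i}-i=\mu_{j}+1-j$ (using $\mu_{i}>\mu_{i+1}$ at a removable corner) rules out any coincidence. This disjointness makes $z$ simultaneously addable and removable, the desired contradiction, and so no $(m,n)$-critical pair $[(\alpha,S),(\gamma,S')]$ can exist.
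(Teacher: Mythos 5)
Your proof is correct and follows essentially the same route as the paper's: reduce $\gamma$ to $m\,e_{p}$ with $p\geq 2$, compare the two content vectors to force $c\left(1,S'\right)=c\left(1,S\right)$ and $c\left(2,S'\right)=z$, and then derive the contradiction from $z$. The only cosmetic difference is the final step: the paper attaches $1$ at the addable corner of $\widehat{\tau}$ with content $z$ and observes that entries $1$ and $2$ of the resulting tableau would lie on the same diagonal, which is impossible in an RSYT, while you phrase the same obstruction as the disjointness of the addable- and removable-corner contents of $\widehat{\tau}$.
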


\begin{proof}
Suppose that $\gamma\trianglelefteq\alpha$ and $\alpha_{i}-\gamma_{i}=mv_{i}%
$,$~c\left(  i,S\right)  -c\left(  r_{\gamma}\left(  i\right)  ,S^{\prime
}\right)  =nv_{i}$ with $v_{i}\in\mathbb{Z}$, and $1\leq i\leq N$. From
$\left\vert \gamma\right\vert =\left\vert \alpha\right\vert =m$ and
$\alpha_{j}=m$ or $=0$ it follows that $\gamma_{k}=m$ for some $k$ and
$\gamma_{i}=0$ for $i\neq k$. If $k=1$ then $z_{i}=0$ for all $i$ and
$c\left(  i,S\right)  =c\left(  r_{\gamma}\left(  i\right)  ,S^{\prime
}\right)  =c\left(  i,S^{\prime}\right)  $, because $\gamma\in\mathbb{N}%
_{0}^{N,+}$. The content vector determines $S^{\prime}$ uniquely and thus
$S^{\prime}=S$ and $\gamma=\alpha$. Now suppose $k>1$ then $v_{1}=1,v_{k}=-1$
and $v_{i}=0$ otherwise. The respective content vectors are%
\begin{align*}
\left[  c\left(  i,S\right)  \right]  _{i=1}^{N} &  =\left[  c\left(
1,S\right)  ,c\left(  2,S\right)  ,\ldots,c\left(  k,S\right)  ,c\left(
k+1,S\right)  ,\ldots,c\left(  N,S\right)  \right]  ,\\
\left[  c\left(  r_{\gamma}\left(  i\right)  ,S^{\prime}\right)  \right]
_{i=1}^{N} &  =\left[  c\left(  2,S^{\prime}\right)  ,c\left(  3,S^{\prime
}\right)  ,\ldots,c\left(  1,S^{\prime}\right)  ,c\left(  k+1,S^{\prime
}\right)  ,\ldots,c\left(  N,S^{\prime}\right)  \right]  .
\end{align*}
The hypothesis on $\gamma$ implies $c\left(  i,S^{\prime}\right)  =c\left(
i-1,S\right)  $ for $i=3\leq i\leq k$, $c\left(  i,S^{\prime}\right)
=c\left(  i,S\right)  $ for $k+1\leq i\leq N$, and $c\left(  2,S^{\prime
}\right)  =c\left(  1,S\right)  -n$, $c\left(  1,S^{\prime}\right)  =c\left(
k,S\right)  +n$. Since $S$ and $S^{\prime}$ are both of shape $\tau$ the two
content vectors are permutations of each other. The list of values $\left[
c\left(  3,S^{\prime}\right)  ,\ldots,c\left(  N,S^{\prime}\right)  \right]  $
agrees with $\left[  c\left(  2,S\right)  ,\ldots,c\left(  k-1,S\right)
,c\left(  k+1,S\right)  \ldots,c\left(  N,S\right)  \right]  $ thus $\left[
c\left(  1,S\right)  ,c\left(  k,S\right)  \right]  $ and $\left[  c\left(
1,S^{\prime}\right)  ,c\left(  2,S^{\prime}\right)  \right]  $ contain the
same two numbers. Since $c\left(  2,S^{\prime}\right)  =c\left(  1,S\right)
-n\neq c\left(  1,S\right)  $ the equation $c\left(  1,S\right)  =c\left(
1,S^{\prime}\right)  $ must hold. The possible locations of the entry $1$ in a
RSYT must have different contents (else they would be on the same diagonal
$\left\{  \left(  i,j\right)  :j-i=c\left(  1,S\right)  \right\}  $). Thus
$\left(  \operatorname{row}\left(  1,S^{\prime}\right)  ,\operatorname{col}%
\left(  1,S^{\prime}\right)  \right)  =\left(  \operatorname{row}%
(1,S,\operatorname{col}\left(  1,S\right)  \right)  $ and $S$ and $S^{\prime}$
lead to the same $\widehat{\tau}$ (the partition formed by removing the cell
of $1$ from $\tau).$By construction $n=z$ for some $z\in\mathcal{Z}\left(
\widehat{\tau}\right)  $, and $z$ determines a cell $\left(  i_{s}%
,\widehat{\tau}_{i_{s}}+1\right)  $ where $1$ can be attached to the part of
$S^{\prime}$ containing $\left\{  2,3,\ldots,N\right\}  $ to form a new RSYT
$S^{\prime\prime}$. By construction $c\left(  1,S^{\prime\prime}\right)
=z=c\left(  1,S\right)  -n=c\left(  2,S^{\prime}\right)  =c\left(
2,S^{\prime\prime}\right)  .$It is impossible for $c\left(  1,S^{\prime\prime
}\right)  =c\left(  2,S^{\prime\prime}\right)  $ for any RSYT, thus
$\gamma\neq\alpha$ can not occur.
\end{proof}

The same problem for $\widehat{\alpha}=\left(  m-1,0,\ldots\right)  $ is
almost trivial.

\begin{lemma}
\label{JMpoles2}Suppose $S^{\prime}\in\mathcal{Y}\left(  \tau\right)  $,
$\left\vert \gamma\right\vert =m-1$ and $\widehat{\alpha}_{i}-\gamma
_{i}=mv_{i}$,$~c\left(  i,S\right)  -c\left(  r_{\gamma}\left(  i\right)
,S^{\prime}\right)  =nv_{i}$ with $v_{i}\in\mathbb{Z}$, and $1\leq i\leq N$.
Then $\left(  \widehat{\alpha},S\right)  =\left(  \gamma,S^{\prime}\right)  $.
\end{lemma}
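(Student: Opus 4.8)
The plan is to exploit the very rigid shape $\widehat{\alpha}=\left(m-1,0,\ldots,0\right)$ to force $v=0$ directly, after which the two tableaux must coincide. First I would write the hypothesis $\widehat{\alpha}_{i}-\gamma_{i}=mv_{i}$ out componentwise: for $i=1$ it reads $\gamma_{1}=\left(m-1\right)-mv_{1}$, and for $i\geq2$ it reads $\gamma_{i}=-mv_{i}$. Next I would bring in the constraint $\gamma\in\mathbb{N}_{0}^{N}$. Since $m\geq1$, the relation $\gamma_{1}=\left(m-1\right)-mv_{1}\geq0$ forces $v_{1}\leq0$ (if $v_{1}\geq1$ then $\gamma_{1}\leq-1$), and likewise $\gamma_{i}=-mv_{i}\geq0$ forces $v_{i}\leq0$ for each $i\geq2$. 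Thus every $v_{i}$ is nonpositive.

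Then I would use the degree constraint. Summing the componentwise relations over $i$ and invoking $\left\vert\gamma\right\vert=m-1=\left\vert\widehat{\alpha}\right\vert$ gives $m\sum_{i=1}^{N}v_{i}=0$, hence $\sum_{i=1}^{N}v_{i}=0$. A sum of nonpositive integers that vanishes forces each $v_{i}=0$, so $\gamma_{i}=\widehat{\alpha}_{i}$ for all $i$, i.e. $\gamma=\widehat{\alpha}$ as compositions. With $v=0$ the second hypothesis collapses to $c\left(i,S\right)=c\left(r_{\gamma}\left(i\right),S^{\prime}\right)$ for all $i$. Since $\widehat{\alpha}$ is already nonincreasing we have $r_{\gamma}=\mathrm{id}$, so $c\left(i,S\right)=c\left(i,S^{\prime}\right)$ for all $i$; as the content vector determines an RSYT uniquely, $S=S^{\prime}$, and therefore $\left(\widehat{\alpha},S\right)=\left(\gamma,S^{\prime}\right)$.

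In contrast with the proof of Proposition \ref{noCpair}, the main obstacle here is essentially absent, which is why the author calls it ``almost trivial.'' The crucial structural point is that the only nonzero part of $\widehat{\alpha}$ is $m-1<m$: no part of $\gamma$ can differ from the corresponding part of $\widehat{\alpha}$ by a nonzero multiple of $m$ without becoming negative or forcing the total degree to change, so there is no room for the mass to ``jump'' to a different coordinate and the delicate content-vector permutation analysis required for $\alpha$ never arises. The one thing to handle carefully is the joint use of nonnegativity (yielding $v_{i}\leq0$) and the degree equality (yielding $\sum_{i}v_{i}=0$), which together pin $v$ to zero in one stroke.
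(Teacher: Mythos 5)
Your proof is correct and follows essentially the same route as the paper: both arguments use only the nonnegativity of $\gamma$ and the degree constraint $\left\vert\gamma\right\vert=m-1$ to force $v=0$, after which the content vectors coincide and determine $S=S^{\prime}$. The only (immaterial) difference is in bookkeeping — the paper notes per coordinate that $\left\vert\widehat{\alpha}_{i}-\gamma_{i}\right\vert\leq m-1<m$ so each $mv_{i}$ must vanish, while you combine the sign condition $v_{i}\leq0$ with $\sum_{i}v_{i}=0$.
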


\begin{proof}
The hypothesis $\left\vert \gamma\right\vert =m-1$ implies $\gamma_{i}\leq
m-1$ and thus $\left\vert \widehat{\alpha}_{i}-\gamma_{i}\right\vert \leq m-1$
for all $i$. This implies $v_{i}=0$ for all $i$ implying $\gamma
=\widehat{\alpha}$ and $c\left(  j,S\right)  =c\left(  j,S^{\prime}\right)  $
for all $j$, thus $S=S^{\prime}$.
\end{proof}

\begin{proposition}
\label{Jpoles1}Suppose $\left(  \beta,S\right)  \in\mathbb{N}_{0}^{N}%
\times\mathcal{Y}\left(  \tau\right)  $, $\gcd\left(  m,n\right)  =1$ and
there are no $\left(  m,n\right)  $-critical pairs $\left[  \left(
\beta,S\right)  ,\left(  \gamma,S^{\prime}\right)  \right]  $ then
$J_{\beta,S}$ has no poles at $\kappa=-\frac{m}{n}$.
\end{proposition}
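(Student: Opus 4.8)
The plan is to use the triangular expansion of the monomial basis in terms of Jack polynomials, namely
\[
x^{\beta^{+}}\otimes S=J_{\beta^{+},S}+\sum_{\gamma\vartriangleleft\beta^{+},\,S^{\prime}\in\mathcal{Y}\left(\tau\right)}b_{\beta^{+},\gamma,S,S^{\prime}}\left(\kappa\right)J_{\gamma,S^{\prime}},
\]
together with the fact that the Jack polynomials are the simultaneous eigenfunctions of the commuting Cherednik-Dunkl operators $\{\mathcal{U}_{i}\}$ with eigenvalues $\zeta_{\gamma,S^{\prime}}(i)$. First I would argue that a pole of $J_{\beta,S}$ at $\kappa=-\frac{m}{n}$ can arise only through a collision of spectral vectors: the left side $x^{\beta^{+}}\otimes S$ has coefficients that are polynomial (in particular regular) in $\kappa$, so if $J_{\beta,S}$ developed a pole, the expansion would force a compensating pole in some $J_{\gamma,S^{\prime}}$ with $\gamma\vartriangleleft\beta^{+}$ having the same residue data. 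The standard mechanism producing such poles in the Yang--Baxter/Gram-matrix construction of the $J$'s is precisely a denominator of the form $\zeta_{\beta,S}(i)-\zeta_{\gamma,S^{\prime}}(i)$ vanishing at $\kappa=-\frac{m}{n}$.

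The main step is then the contrapositive: I would show that if $J_{\beta,S}$ has a pole at $\kappa=-\frac{m}{n}$, then the spectral vector of $(\beta,S)$ must coincide at $\kappa=-\frac{m}{n}$ with that of some $(\gamma,S^{\prime})$ with $\gamma\vartriangleleft\beta^{+}$ (equivalently $\beta^{+}\vartriangleright\gamma$), i.e.\ $\zeta_{\beta,S}(i)=\zeta_{\gamma,S^{\prime}}(i)$ for all $i$ at that value of $\kappa$. Granting this, Lemma \ref{Cpair} applies verbatim (its hypothesis $\gcd(m,n)=1$ is exactly what we assumed) and yields that $\left[\left(\beta,S\right),\left(\gamma,S^{\prime}\right)\right]$ is an $(m,n)$-critical pair, contradicting the hypothesis that no such critical pairs exist. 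Hence no pole can occur, which is the desired conclusion.

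I would organize the coincidence argument inductively along the dominance order $\vartriangleright$, since the recursive construction of the $J$'s builds $J_{\beta,S}$ from polynomials labelled by strictly smaller compositions through the Cherednik operators and the intertwiners; the denominators introduced at each recursive step are differences of spectral-vector entries, and a genuine pole survives only if two of these entries become equal at $\kappa=-\frac{m}{n}$. The hard part will be pinning down precisely that every pole of $J_{\beta,S}$ forces an \emph{exact} equality of the two full spectral vectors rather than merely of some individual components, so that Lemma \ref{Cpair} is genuinely applicable; this requires invoking the nondegeneracy of the $\{\mathcal{U}_{i}\}$-eigenspace decomposition away from the spectral collisions, together with the triangularity that guarantees the leading term of $J_{\beta,S}$ remains $x^{\beta}\otimes S\tau(r_{\beta})$ with coefficient $1$ and therefore cannot itself vanish or blow up. Once the pole is localized to a spectral-vector coincidence, the combinatorial input is entirely absorbed into Lemma \ref{Cpair} and the argument closes.
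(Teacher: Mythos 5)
Your proposal assembles the right ingredients (the triangular expansion of $x^{\beta}\otimes S\tau\left(  r_{\beta}\right)  $ in the $J$'s, the spectral vectors, and Lemma \ref{Cpair}), but the central step is left unproven, and you concede as much: you must show that a pole of $J_{\beta,S}$ at $\kappa=-\frac{m}{n}$ forces the \emph{entire} spectral vector of $\left(  \beta,S\right)  $ to coincide with that of some $\left(  \gamma,S^{\prime}\right)  $, whereas the denominators arising in the recursive construction vanish already when a \emph{single} component $\zeta_{\beta,S}\left(  i\right)  -\zeta_{\gamma,S^{\prime}}\left(  i\right)  $ does. Promoting a one-component collision to a full spectral-vector collision is exactly the difficulty, and ``invoking the nondegeneracy of the eigenspace decomposition'' is not an argument; as written the contrapositive does not close.

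The paper avoids this issue entirely by running the implication in the other direction. Since there are no $\left(  m,n\right)  $-critical pairs, Lemma \ref{Cpair} guarantees that for each competing $\left(  \gamma,S^{\prime}\right)  $ with $\gamma\vartriangleleft\beta$ there is \emph{at least one} index $i\left[  \gamma,S^{\prime}\right]  $ with $\zeta_{\beta,S}\left(  i\left[  \gamma,S^{\prime}\right]  \right)  \neq\zeta_{\gamma,S^{\prime}}\left(  i\left[  \gamma,S^{\prime}\right]  \right)  $ at $\kappa=-\frac{m}{n}$ --- one distinguishing component per pair is all that is needed. One then forms the projection
\[
\mathcal{T}=\prod_{\gamma\vartriangleleft\beta,S^{\prime}}\frac{\mathcal{U}_{i\left[  \gamma,S^{\prime}\right]  }-\zeta_{\gamma,S^{\prime}}\left(  i\left[  \gamma,S^{\prime}\right]  \right)  }{\zeta_{\beta,S}\left(  i\left[  \gamma,S^{\prime}\right]  \right)  -\zeta_{\gamma,S^{\prime}}\left(  i\left[  \gamma,S^{\prime}\right]  \right)  },
\]
which fixes $J_{\beta,S}$ and kills every other term of the expansion, so that $J_{\beta,S}=\left(  x^{\beta}\otimes S\tau\left(  r_{\beta}\right)  \right)  \mathcal{T}$ with denominators explicitly nonvanishing at $\kappa=-\frac{m}{n}$. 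I recommend you replace your contrapositive scheme with this constructive device; without it (or an honest proof that poles only arise from full spectral-vector coincidences) your argument has a genuine gap.
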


\begin{proof}
By the triangularity of formula (\ref{Jxseries}) there is an expansion%
\[
x^{\beta}\otimes S\tau\left(  r_{\beta}\right)  =J_{\beta,S}+\sum
_{\gamma\vartriangleleft\beta,S^{\prime}\in\mathcal{Y}\left(  \tau\right)
}b_{\beta,\gamma,S,S^{\prime}}\left(  \kappa\right)  J_{\gamma,S^{\prime}}.
\]
By Lemma \ref{Cpair} for each $\gamma\vartriangleleft\beta,S^{\prime}%
\in\mathcal{Y}\left(  \tau\right)  $ there is at least one $i=i\left[
\gamma,S^{\prime}\right]  $ such that $\zeta_{\beta,S}\left(  i\right)
-\zeta_{\gamma,S^{\prime}}\left(  i\right)  \neq0$ when $\kappa=-\frac{m}{n}$.
Define an operator%
\[
\mathcal{T}:=\prod_{\gamma\vartriangleleft\beta,S^{\prime}\in\mathcal{Y}%
\left(  \tau\right)  }\frac{\mathcal{U}_{i\left[  \gamma,S^{\prime}\right]
}-\zeta_{\gamma,S^{\prime}}\left(  i\left[  \gamma,S^{\prime}\right]  \right)
}{\zeta_{\beta,S}\left(  i\left[  \gamma,S^{\prime}\right]  \right)
-\zeta_{\gamma,S^{\prime}}\left(  i\left[  \gamma,S^{\prime}\right]  \right)
}.
\]
Then $J_{\beta,S}\mathcal{T}=J_{\beta,S}$ and each $J_{\gamma,S^{\prime}}$
(with $\gamma\vartriangleleft\beta$) is annihilated by at least one factor of
$\mathcal{T}$. Thus $J_{\beta,S}=\left(  x^{\beta}\otimes S\tau\left(
r_{\beta}\right)  \right)  \mathcal{T}$, a polynomial whose coefficients have
denominators which are factors of\linebreak\ $\prod\limits_{\gamma
\vartriangleleft\beta,S^{\prime}\in\mathcal{Y}\left(  \tau\right)  }\left(
\zeta_{\beta,S}\left(  i\left[  \gamma,S^{\prime}\right]  \right)
-\zeta_{\gamma,S^{\prime}}\left(  i\left[  \gamma,S^{\prime}\right]  \right)
\right)  $. By construction of $\left\{  i\left[  \gamma,S^{\prime}\right]
\right\}  $ this product does not vanish at $\kappa=-\frac{m}{n}$.
\end{proof}

We are ready for the main result on Jack polynomials.

\begin{theorem}
Suppose $\alpha=\left(  m,0,\ldots\right)  ,S\in\mathcal{Y}\left(
\tau\right)  $ and $\mathcal{Z}\left(  \widehat{\tau}\right)  $ is as in
Definition \ref{T_tau}. Further suppose $z\in\mathcal{Z}\left(  \widehat{\tau
}\right)  $, $n:=c\left(  1,S\right)  -z\neq0$ and $\gcd\left(  m,n\right)
=1$ then $J_{\alpha,S}$ is a singular polynomial for $\kappa=-\frac{m}{n}$.
\end{theorem}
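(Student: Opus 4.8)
The plan is to assemble the pieces already established in this subsection: the Dunkl-operator reduction formula, the location of the zeros of $C_{S,m}(\kappa)$, and the pole analysis for the two relevant Jack polynomials. First I would invoke Proposition \ref{Ckzeros}: since $z\in\mathcal{Z}(\widehat{\tau})$ and $n=c(1,S)-z\neq0$, the value $\kappa=-\frac{m}{n}$ lies in the zero set of $C_{S,m}(\kappa)$. The strategy is then to evaluate the identity (\ref{JDf1}), namely $J_{\alpha,S}\mathcal{D}_1=C_{S,m}(\kappa)J_{\widehat{\alpha},S}$, at $\kappa=-\frac{m}{n}$ and read off that $J_{\alpha,S}\mathcal{D}_1=0$ there, and finally to combine this with the vanishing $J_{\alpha,S}\mathcal{D}_j=0$ for $j\geq2$ coming from Proposition \ref{JDi0}.

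The delicate point, and the main obstacle, is that (\ref{JDf1}) is an identity of \emph{rational functions} in $\kappa$; to legitimately substitute $\kappa=-\frac{m}{n}$ and deduce vanishing of the left side from vanishing of $C_{S,m}$, I must know that neither $J_{\alpha,S}$ nor $J_{\widehat{\alpha},S}$ has a pole at $\kappa=-\frac{m}{n}$, since otherwise a pole of $J_{\widehat{\alpha},S}$ could cancel the zero of $C_{S,m}$ and produce a nonzero (or ill-defined) value. This is exactly what Propositions \ref{noCpair}, \ref{Jpoles1} and Lemma \ref{JMpoles2} are designed to control. For $J_{\alpha,S}$, Proposition \ref{noCpair} asserts there are no $(m,n)$-critical pairs $[(\alpha,S),(\gamma,S')]$, so Proposition \ref{Jpoles1} (which applies because $\gcd(m,n)=1$) yields that $J_{\alpha,S}$ is pole-free at $\kappa=-\frac{m}{n}$. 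For $J_{\widehat{\alpha},S}$, Lemma \ref{JMpoles2} shows that any datum satisfying the critical-pair equations forces $(\gamma,S')=(\widehat{\alpha},S)$, which violates the strict domination $\widehat{\alpha}\vartriangleright\gamma$ demanded of a genuine critical pair; hence there are no $(m,n)$-critical pairs $[(\widehat{\alpha},S),(\gamma,S')]$ either, and Proposition \ref{Jpoles1} again gives that $J_{\widehat{\alpha},S}$ has no pole at $\kappa=-\frac{m}{n}$.

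With both polynomials finite at $\kappa=-\frac{m}{n}$, I would substitute into (\ref{JDf1}): because $C_{S,m}\!\left(-\frac{m}{n}\right)=0$ while $J_{\widehat{\alpha},S}$ stays finite, the right-hand side vanishes, so $J_{\alpha,S}\mathcal{D}_1=0$ at $\kappa=-\frac{m}{n}$. Next, Proposition \ref{JDi0} applied to $\beta=\alpha=(m,0,\ldots,0)$ with $k=2$ gives $J_{\alpha,S}\mathcal{D}_j=0$ for all $j\geq2$ identically in $\kappa$, hence at $\kappa=-\frac{m}{n}$ in particular. Combining these, the polynomial $J_{\alpha,S}$ evaluated at $\kappa=-\frac{m}{n}$ is annihilated by every Dunkl operator $\mathcal{D}_1,\ldots,\mathcal{D}_N$, which is precisely the assertion that $J_{\alpha,S}$ is singular for $\kappa=-\frac{m}{n}$. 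Beyond citing these results, the only point requiring care is tracking how the hypothesis $\gcd(m,n)=1$ feeds through Lemma \ref{Cpair} into the critical-pair framework so that the pole-freeness conclusion of Proposition \ref{Jpoles1} is genuinely available.
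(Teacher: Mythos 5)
Your proposal is correct and follows essentially the same route as the paper's own proof: Proposition \ref{JDi0} for $\mathcal{D}_j$ with $j\geq 2$, the reduction formula (\ref{JDf1}) together with the zero of $C_{S,m}$ from Proposition \ref{Ckzeros}, and pole-freeness of $J_{\alpha,S}$ and $J_{\widehat{\alpha},S}$ via Propositions \ref{noCpair}, \ref{Jpoles1} and Lemma \ref{JMpoles2}. Your explicit remark that the identity is one of rational functions, so the zero of $C_{S,m}$ could in principle be cancelled by a pole of $J_{\widehat{\alpha},S}$, makes the logic of the paper's (terser) argument fully explicit.
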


\begin{proof}
From Proposition \ref{JDi0} $J_{\alpha,S}\mathcal{D}_{j}=0$ for $2\leq j\leq
N$ and $J_{\alpha,S}\mathcal{D}_{1}=C_{S,m}\left(  \kappa\right)
J_{\widehat{\alpha},S}$, where $\widehat{\alpha}=\left(  m-1,0,\ldots\right)
.$By Propositions \ref{noCpair}, \ref{Jpoles1} and Lemma \ref{JMpoles2}
$J_{\alpha,S}$ and $J_{\widehat{\alpha},S}$ do not have poles at
$\kappa=-\frac{m}{n}$. Furthermore $C_{S,m}\left(  -\frac{m}{n}\right)  =0$
and thus $J_{\alpha,S}\mathcal{D}_{1}=0$ at $\kappa=-\frac{m}{n}$.
\end{proof}

To set up the analogous results for Macdonald polynomials consider the
differences between two spectral vectors: $\widetilde{\zeta}_{\beta,S}\left(
i\right)  -\widetilde{\zeta}_{\gamma,S^{\prime}}\left(  i\right)
=q^{\beta_{i}}t^{c\left(  r_{\beta}\left(  i\right)  ,S\right)  }%
-q^{\gamma_{i}}t^{c\left(  r_{\gamma}\left(  i\right)  ,S^{\prime}\right)
}=q^{\gamma_{i}}t^{c\left(  r_{\gamma}\left(  i\right)  ,S^{\prime}\right)
}\left(  q^{\beta_{i}-\gamma_{i}}t^{c\left(  r_{\gamma}\left(  i\right)
,S^{\prime}\right)  -c\left(  r_{\gamma}\left(  i\right)  ,S^{\prime}\right)
}-1\right)  $. To relate this to $\left(  m,n\right)  $-critical pairs we
specify a condition on $\left(  q,t\right)  $ which implies $a=mv$ and $b=nv$
for some $v\in\mathbb{Z}$ when $q^{a}t^{b}=1$.

\begin{definition}
Suppose $m,n$ are integers such $m\geq1,n\neq0$ and $\gcd\left(  m,n\right)
=g\geq1.$ Let $u\in\mathbb{C}\backslash\left\{  0\right\}  $ such that $u$ is
not a root of unity and $\omega=\exp\left(  \frac{2\pi\mathrm{i}k}{m}\right)
$ with $\gcd\left(  k,g\right)  =1$. Define $\varpi=\left(  q,t\right)
=\left(  \omega u^{-n/g},u^{m/g}\right)  $.
\end{definition}

\begin{lemma}
\label{qtab}Suppose $a,b$ are integers such that $q^{a}t^{b}=1$ at $\left(
q,t\right)  =\varpi$ then $a=mv,b=nv$ for some $v\in\mathbb{Z}$.
\end{lemma}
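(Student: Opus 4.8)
The plan is to substitute $\varpi=\left(  \omega u^{-n/g},u^{m/g}\right)  $ directly into $q^{a}t^{b}=1$ and then separate the resulting identity into a root-of-unity part and a power-of-$u$ part. Computing,
\[
q^{a}t^{b}=\omega^{a}u^{-an/g}u^{bm/g}=\omega^{a}u^{\left(  bm-an\right)  /g}.
\]
Since $g=\gcd\left(  m,n\right)  $ divides both $m$ and $n$, the exponent $P:=\left(  bm-an\right)  /g$ is an integer, so the hypothesis reads $u^{P}=\omega^{-a}$, which is a root of unity. Because $u$ is not a root of unity, $u^{P}$ can be a root of unity only when $P=0$ (otherwise $\left(  u^{P}\right)  ^{M}=1$ for some $M$ would make $u$ a root of unity). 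Hence $P=0$, giving $bm=an$, and separately $\omega^{a}=1$.

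Next I would introduce the reduced quantities $m=gm^{\prime}$, $n=gn^{\prime}$ with $\gcd\left(  m^{\prime},n^{\prime}\right)  =1$. The relation $bm=an$ becomes $bm^{\prime}=an^{\prime}$, and coprimality of $m^{\prime},n^{\prime}$ forces $m^{\prime}\mid a$; writing $a=m^{\prime}w$ then yields $b=n^{\prime}w$ for some $w\in\mathbb{Z}$. It remains to show $g\mid w$, which is precisely where the condition $\gcd\left(  k,g\right)  =1$ enters. The equation $\omega^{a}=1$ means $m\mid ka$, i.e. $gm^{\prime}\mid km^{\prime}w$, so $g\mid kw$; since $\gcd\left(  k,g\right)  =1$ this gives $g\mid w$. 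Writing $w=gv$ then produces $a=m^{\prime}gv=mv$ and $b=n^{\prime}gv=nv$, as required.

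The hard part will be the final divisibility step rather than the initial computation. The relation $bm=an$ by itself only pins down $\left(  a,b\right)  $ up to a multiple of the reduced pair $\left(  m^{\prime},n^{\prime}\right)  $, so it produces $a=m^{\prime}w$, $b=n^{\prime}w$ instead of the claimed multiples of the full $\left(  m,n\right)  $; the missing factor of $g$ must be recovered from the cyclotomic constraint $\omega^{a}=1$ together with $\gcd\left(  k,g\right)  =1$. The bookkeeping that keeps $g$, $m^{\prime}$, $n^{\prime}$, and $k$ straight is the delicate point, whereas the separation of the two conditions via the assumption that $u$ is not a root of unity is routine.
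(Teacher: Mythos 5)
Your proof is correct and follows essentially the same route as the paper's: separate $\omega^{a}u^{(bm-an)/g}=1$ into the conditions $bm=an$ and $\omega^{a}=1$ using that $u$ is not a root of unity, deduce $\left(m/g\right)\mid a$ from coprimality of the reduced pair, and then recover the extra factor of $g$ from $\omega^{a}=1$ together with $\gcd\left(k,g\right)=1$. The only difference is cosmetic: you phrase the last step as $g\mid kw$ where the paper says $\exp\left(2\pi\mathrm{i}k/g\right)$ is a primitive $g$th root of unity.
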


\begin{proof}
By hypothesis%
\[
1=\left(  \omega u^{-n/g}\right)  ^{a}\left(  u^{m/g}\right)  ^{b}=\omega
^{a}u^{\left(  -na+mb\right)  /g}.
\]
Since $u$ is not a root of unity it follows that $-a\left(  \frac{n}%
{g}\right)  +b\left(  \frac{m}{g}\right)  =0$ but $\gcd\left(  \frac{n}%
{g},\frac{m}{g}\right)  =1$ and thus $\frac{m}{g}$ divides $a.$ Write
$a=\left(  \frac{m}{g}\right)  c$ for some integer $c$ then $1=\omega^{a}%
=\exp\left(  \frac{2\pi\mathrm{i}k}{m}\frac{mc}{g}\right)  =\exp\left(
\frac{2\pi\mathrm{i}kc}{g}\right)  .$This implies $c=vg$ with $v\in\mathbb{Z}$
because $\exp\left(  \frac{2\pi\mathrm{i}k}{g}\right)  $ is a primitive
$g^{th}$ root of unity. Thus $a=\left(  \frac{m}{g}\right)  vg=mv$ and
$b=\frac{n}{m}a=nv$.
\end{proof}

\begin{remark}
All the possible values of $\varpi$ are included when (1) $g>1$ and
$\omega=\exp\left(  \frac{2\pi\mathrm{i}k}{m}\right)  $ with $\gcd\left(
k,g\right)  =1$ and $1\leq k<g$ (2) $g=1$ and $\omega=1$. To prove this let
$u=\phi v$ with $\phi=\exp\left(  \frac{2\pi\mathrm{i}gl}{m}\right)  $ and
$l\in\mathbb{Z}$ so that $u^{m/g}=v^{m/g}$. Then $q=\omega\phi^{-n/g}%
v^{-n/g}=\exp\left(  2\pi\mathrm{i}\left(  \frac{k-nl}{m}\right)  \right)
v^{-n/g}$. Since $\gcd\left(  m,n\right)  =g$ there are integers $s,s^{\prime
}$ such that $s^{\prime}m+sn=g$. Set $l=s"s$ (with $s^{\prime\prime}%
\in\mathbb{Z})$ then $k-nl=k-s^{\prime\prime}g-s^{\prime\prime}s^{\prime}m$;
thus $\omega\phi^{-n/g}=\exp\left(  2\pi\mathrm{i}\left(  \frac{k-s^{\prime
\prime}g}{m}\right)  \right)  $. If $g>1$ then let $s^{\prime\prime
}=\left\lfloor \frac{k}{g}\right\rfloor +1$ implying $1\leq k-s^{\prime\prime
}g<g$, while if $g=1$ set $s^{\prime\prime}=k$.
\end{remark}

\begin{example}
Suppose $m=8$ and $n=-12$, then $g=4$ and the possible values of $\varpi$ are
$\left(  \exp\left(  \frac{\pi\mathrm{i}}{4}\right)  u^{3},u^{2}\right)  $ and
$\left(  \exp\left(  \frac{3\pi\mathrm{i}}{4}\right)  u^{3},u^{2}\right)  $
where $u$ is not a root of unity.
\end{example}

We will use this result to produce singular polynomials $M_{\alpha,S}$ for
$\left(  q,t\right)  =\varpi$.

\begin{lemma}
Let $\left(  \beta,S\right)  ,\left(  \gamma,S^{\prime}\right)  \in
\mathbb{N}_{0}^{N}\times\mathcal{Y}\left(  \tau\right)  $ such that
$\beta\vartriangleright\gamma$ and $\widetilde{\zeta}_{\beta,S}\left(
i\right)  =\widetilde{\zeta}_{\gamma,S^{\prime}}\left(  i\right)  $ for all
$i$ when $\left(  q,t\right)  =\varpi$ then $\left[  \left(  \beta,S\right)
,\left(  \gamma,S^{\prime}\right)  \right]  $ is an $\left(  m,n\right)
$-critical pair.
\end{lemma}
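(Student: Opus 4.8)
The plan is to mimic the proof of Lemma~\ref{Cpair} (the Jack-case analogue) and combine it with Lemma~\ref{qtab}. First I would write out the hypothesis explicitly: for each $i$ we have $\widetilde{\zeta}_{\beta,S}(i)=\widetilde{\zeta}_{\gamma,S^{\prime}}(i)$ at $(q,t)=\varpi$, which by definition of the spectral vector means $q^{\beta_i}t^{c(r_\beta(i),S)}=q^{\gamma_i}t^{c(r_\gamma(i),S^{\prime})}$. Dividing one side into the other, this is equivalent to $q^{a_i}t^{b_i}=1$ at $\varpi$, where I set $a_i:=\beta_i-\gamma_i$ and $b_i:=c(r_\beta(i),S)-c(r_\gamma(i),S^{\prime})$. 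Both $a_i$ and $b_i$ are integers, so the hypothesis of Lemma~\ref{qtab} is met for each index $i$.

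The key step is then a direct appeal to Lemma~\ref{qtab}: for the specific value $\varpi=(\omega u^{-n/g},u^{m/g})$, the relation $q^{a_i}t^{b_i}=1$ forces $a_i=mv_i$ and $b_i=nv_i$ for some integer $v_i\in\mathbb{Z}$. Applying this for each $i$ from $1$ to $N$ produces the required vector $v=[v_i]_{i=1}^{N}\in\mathbb{Z}^N$ satisfying $\beta_i-\gamma_i=mv_i$ and $c(r_\beta(i),S)-c(r_\gamma(i),S^{\prime})=nv_i$. Together with the standing hypothesis $\beta\vartriangleright\gamma$ (and the conditions $m\geq1$, $n\neq0$ carried by the definition of $\varpi$), this is precisely the definition of an $(m,n)$-critical pair, so $[(\beta,S),(\gamma,S^{\prime})]$ is $(m,n)$-critical.

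I do not expect any genuine obstacle here; this lemma is the $(q,t)$-translation of Lemma~\ref{Cpair}, and all the real work has already been isolated into Lemma~\ref{qtab}. The only thing to be careful about is the algebraic bookkeeping: one must verify that the ratio $\widetilde{\zeta}_{\beta,S}(i)/\widetilde{\zeta}_{\gamma,S^{\prime}}(i)$ is well-defined and equals $q^{a_i}t^{b_i}$, which is immediate since $\varpi$ assigns nonzero values to $q$ and $t$ (as $u\neq0$ and $\omega$ is a root of unity). I would also note, for completeness, that $\gcd(m,n)=g$ is allowed to exceed $1$ in the definition of $\varpi$, so no coprimality assumption is needed and the statement applies uniformly — this is the point where the Macdonald case is slightly more flexible than the Jack case, where Lemma~\ref{Cpair} required $\gcd(m,n)=1$.
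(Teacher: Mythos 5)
Your proof is correct and follows essentially the same route as the paper: rewrite the coincidence of spectral vector entries as $q^{\beta_i-\gamma_i}t^{c(r_\beta(i),S)-c(r_\gamma(i),S')}=1$ at $\varpi$ and invoke Lemma~\ref{qtab} to extract the integers $v_i$. No changes needed.
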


\begin{proof}
The equation $\widetilde{\zeta}_{\beta,S}\left(  i\right)  =\widetilde{\zeta
}_{\gamma,S^{\prime}}\left(  i\right)  $ is $q^{\beta_{i}}t^{c\left(
r_{\beta}\left(  i\right)  ,S\right)  }=q^{\gamma_{i}}t^{c\left(  r_{\gamma
}\left(  i\right)  ,S^{\prime}\right)  }$, that is, $q^{\beta_{i}-\gamma_{i}%
}t^{c\left(  r_{\beta}\left(  i\right)  ,S\right)  -c\left(  r_{\gamma}\left(
i\right)  ,S^{\prime}\right)  }=1$ at $\left(  q,t\right)  =\varpi$. By Lemma
\ref{qtab} there is an integer $v_{i}$ such that $\beta_{i}-\gamma_{i}=mv_{i}$
and $c\left(  r_{\beta}\left(  i\right)  ,S\right)  -c\left(  r_{\gamma
}\left(  i\right)  ,S^{\prime}\right)  =nv_{i}$. This argument applies to all
$i$.
\end{proof}

\begin{proposition}
\label{Mpoles1}Suppose $\left(  \beta,S\right)  \in\mathbb{N}_{0}^{N}%
\times\mathcal{Y}\left(  \tau\right)  $ and there are no $\left(  m,n\right)
$-critical pairs $\left[  \left(  \beta,S\right)  ,\left(  \gamma,S^{\prime
}\right)  \right]  $ then $M_{\beta,S}$ has no poles at $\left(  q,t\right)
=\varpi$.
\end{proposition}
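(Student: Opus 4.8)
The plan is to mirror the argument for the Jack case in Proposition~\ref{Jpoles1}, with the Cherednik operators $\xi_i$ playing the role of the operators $\mathcal{U}_i$ and the multiplicative eigenvalues $\widetilde{\zeta}_{\beta,S}(i)=q^{\beta_i}t^{c(r_\beta(i),S)}$ playing the role of the linear spectral values $\zeta_{\beta,S}(i)$. First I would use the triangularity of the Macdonald polynomials with respect to the monomial basis to expand the leading monomial-vector of $M_{\beta,S}$ in Macdonald polynomials of labels $\trianglelefteq\beta$:
\[
x^{\beta}\otimes S\tau(R_{\beta})=q^{-a}t^{-b}M_{\beta,S}+\sum_{\gamma\vartriangleleft\beta,\,S'\in\mathcal{Y}(\tau)}b_{\beta,\gamma,S,S'}(q,t)\,M_{\gamma,S'},
\]
where $q^{a}t^{b}$ is the leading coefficient of $M_{\beta,S}$. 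The goal is to produce a projection-type operator that isolates $M_{\beta,S}$ from this sum and whose coefficients are regular at $\varpi$.

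By the preceding lemma the hypothesis that no $(m,n)$-critical pair $[(\beta,S),(\gamma,S')]$ exists forces the spectral vectors of $(\beta,S)$ and $(\gamma,S')$ to disagree in at least one coordinate at $(q,t)=\varpi$; choose such an index $i=i[\gamma,S']$, so that $\widetilde{\zeta}_{\beta,S}(i[\gamma,S'])\neq\widetilde{\zeta}_{\gamma,S'}(i[\gamma,S'])$ there. Form
\[
\mathcal{T}:=\prod_{\gamma\vartriangleleft\beta,\,S'\in\mathcal{Y}(\tau)}\frac{\xi_{i[\gamma,S']}-\widetilde{\zeta}_{\gamma,S'}(i[\gamma,S'])}{\widetilde{\zeta}_{\beta,S}(i[\gamma,S'])-\widetilde{\zeta}_{\gamma,S'}(i[\gamma,S'])}.
\]
Since the $\xi_i$ commute and every Macdonald polynomial is a simultaneous $\xi_i$-eigenfunction, each factor acts as a scalar on any $M_{\delta,S''}$: the factor indexed by $(\gamma,S')$ annihilates $M_{\gamma,S'}$, while every factor fixes $M_{\beta,S}$. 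Hence $M_{\beta,S}\mathcal{T}=M_{\beta,S}$ and $M_{\gamma,S'}\mathcal{T}=0$ for all $\gamma\vartriangleleft\beta$, and applying $\mathcal{T}$ to the displayed expansion collapses it to $(x^{\beta}\otimes S\tau(R_{\beta}))\mathcal{T}=q^{-a}t^{-b}M_{\beta,S}$, that is $M_{\beta,S}=q^{a}t^{b}\,(x^{\beta}\otimes S\tau(R_{\beta}))\mathcal{T}$. This identity is purely algebraic and does not presuppose that the $M_{\gamma,S'}$ are themselves pole-free.

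It remains to verify that the right-hand side has no pole at $\varpi$. The only denominators that $\mathcal{T}$ introduces are the normalizing scalars $\widetilde{\zeta}_{\beta,S}(i[\gamma,S'])-\widetilde{\zeta}_{\gamma,S'}(i[\gamma,S'])$, which are nonzero at $\varpi$ by the choice of the indices $i[\gamma,S']$. The step that genuinely differs from the Jack argument, and the one I expect to require the most care, is the bookkeeping of $t$-powers: the operators $\xi_i$ are assembled from the inverse Hecke generators $T_j^{-1}$ and the scalar $t^{i-N}$, so their action has entries that are Laurent in $t$ and polynomial in $q$ rather than polynomial, and the vector $S\tau(R_{\beta})$ involves the representation matrices $\tau(T_j)^{-1}$. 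All of these are regular at $\varpi$ once one checks that $q\neq0$, $t\neq0$, and $t$ is not a root of unity there; each follows from $\varpi=(\omega u^{-n/g},u^{m/g})$ because $u$ is nonzero and not a root of unity while $\omega$ is a root of unity. The leading factor $q^{a}t^{b}$ is likewise a nonzero scalar at $\varpi$. Consequently $M_{\beta,S}$, being the image of the polynomial $x^{\beta}\otimes S\tau(R_{\beta})$ under an operator whose coefficients are regular at $\varpi$, has no pole there.
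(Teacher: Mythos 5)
Your proposal is correct and follows essentially the same route as the paper: the paper's own proof simply declares the argument ``essentially identical'' to that of Proposition~\ref{Jpoles1}, performing exactly the substitutions you carry out ($\mathcal{U}_i\to\xi_i$, $\zeta\to\widetilde{\zeta}$, $x^{\beta}\otimes S\tau(r_{\beta})\to q^{a}t^{b}x^{\beta}\otimes S\tau(R_{\beta})$) and noting that the resulting denominators $q^{\beta_i}t^{b}-q^{\gamma_i}t^{b'}$ do not vanish at $\varpi$. Your additional check that the $\xi_i$ and $\tau(R_{\beta})$ are themselves regular at $\varpi$ (since $q,t\neq 0$ and $t$ is not a root of unity there) is a harmless refinement of a point the paper leaves implicit.
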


\begin{proof}
The proof is essentially identical to that of Proposition \ref{Jpoles1}. There
replace $x^{\beta}\otimes S\tau\left(  r_{\alpha}\right)  $ by $q^{a}%
t^{b}x^{\beta}\otimes S\tau\left(  R_{\beta}\right)  $ (with the appropriate
prefactor $q^{a}t^{b}$), $J$ by $M$, $\zeta$ by $\widetilde{\zeta}$,
$\mathcal{U}_{i}$ by $\xi_{i}$. The formula shows that $M_{\beta,S}$ is a
polynomial, the denominators of whose coefficients are products of factors
with the form $q^{\beta_{i}}t^{b}-q^{\gamma_{i}}t^{b^{\prime}}$, and none of
these vanish at $\left(  q,t\right)  =\varpi$.
\end{proof}

This is our main result for the Macdonald polynomials.

\begin{theorem}
\label{mainMthm}Suppose $\alpha=\left(  m,0,\ldots\right)  ,S\in
\mathcal{Y}\left(  \tau\right)  $ and $\mathcal{Z}\left(  \widehat{\tau
}\right)  $ is as in Definition \ref{T_tau}. Further suppose $z\in
\mathcal{Z}\left(  \widehat{\tau}\right)  $, $n:=c\left(  1,S\right)  -z\neq0$
then $M_{\alpha,S}$ is a singular polynomial for $\left(  q,t\right)  =\varpi$.
\end{theorem}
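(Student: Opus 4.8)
The plan is to mirror the Jack-polynomial theorem exactly, substituting the $(q,t)$-machinery for the $\kappa$-machinery at every step. The overall architecture is: (1) reduce singularity to $M_{\alpha,S}\mathcal{D}_1 = 0$, (2) show the coefficient $C_{S,m}(q,t)$ vanishes at $\varpi$, and (3) show neither $M_{\alpha,S}$ nor $M_{\widehat\alpha,S}$ has a pole there.

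First I would invoke Proposition \ref{MDi0} to get $M_{\alpha,S}\mathcal{D}_j = 0$ for all $2 \le j \le N$, so that the only obstruction to singularity is the single equation $M_{\alpha,S}\mathcal{D}_1 = C_{S,m}(q,t)\,M_{\widehat\alpha,S} = 0$, where $\widehat\alpha = (m-1,0,\ldots,0)$. Next I would verify that $C_{S,m}(\varpi) = 0$: by Proposition \ref{Cqtzeros} the zeros of $C_{S,m}(q,t)$ are exactly the pairs $(q,t)$ satisfying $q^m t^{c(1,S)-z} = 1$ for some $z \in \mathcal{Z}(\widehat\tau)$ with $z \neq c(1,S)$. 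Since $n := c(1,S) - z$ by hypothesis, I need $q^m t^n = 1$ at $\varpi = (\omega u^{-n/g}, u^{m/g})$, which is a direct computation: $q^m t^n = \omega^m u^{(-mn + mn)/g} = \omega^m = 1$ because $\omega$ is an $m$-th root of unity. So $C_{S,m}$ does vanish at $\varpi$.

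The pole-freeness is where the real content lies, and it reuses the critical-pair infrastructure. The key observation is that Lemma \ref{qtab} converts the coincidence of $(q,t)$-spectral vectors at $\varpi$ into the arithmetic conditions defining an $(m,n)$-critical pair, exactly as Lemma \ref{Cpair} did in the Jack case. Thus the two combinatorial lemmas already proved, Proposition \ref{noCpair} (no critical pair with first component $(\alpha,S)$) and Lemma \ref{JMpoles2} (no nontrivial critical pair with first component $(\widehat\alpha,S)$), apply verbatim, since their statements are purely about the integer data $\alpha_i - \gamma_i = mv_i$ and $c(i,S) - c(r_\gamma(i),S') = nv_i$ and make no reference to whether one is in the Jack or Macdonald setting. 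Feeding "no critical pairs" into Proposition \ref{Mpoles1} then yields that both $M_{\alpha,S}$ and $M_{\widehat\alpha,S}$ are regular at $\varpi$.

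The main obstacle to watch is ensuring the combinatorial lemmas transfer without the $\gcd(m,n)=1$ hypothesis that appeared in the Jack theorem: here $g = \gcd(m,n)$ may exceed $1$, but Lemma \ref{qtab} has already been engineered to deliver the integer $v_i$ with $\beta_i - \gamma_i = mv_i$ and $c\text{-difference} = nv_i$ even when $g > 1$, precisely because $\varpi$ is built from a non-root-of-unity $u$ together with a carefully chosen root of unity $\omega$. Hence the critical-pair conclusions hold unchanged, and no coprimality assumption is needed in the Macdonald statement. Assembling the three ingredients: $C_{S,m}(\varpi) = 0$, combined with the absence of poles, lets me substitute $(q,t) = \varpi$ directly into \eqref{MDf1} to conclude $M_{\alpha,S}\mathcal{D}_1 = 0$, and together with the vanishing of the higher $\mathcal{D}_j$ this shows $M_{\alpha,S}$ is singular at $\varpi$.
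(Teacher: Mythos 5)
Your proposal is correct and follows essentially the same route as the paper: Proposition \ref{MDi0} handles $\mathcal{D}_j$ for $j\geq 2$, Proposition \ref{Cqtzeros} (via the factor $1-q^mt^{c(1,S)-z}$, which your direct computation $q^mt^n=\omega^m=1$ confirms) gives the vanishing of $C_{S,m}$ at $\varpi$, and Propositions \ref{noCpair}, \ref{Mpoles1} together with Lemma \ref{JMpoles2} rule out poles of $M_{\alpha,S}$ and $M_{\widehat{\alpha},S}$. Your remark that Lemma \ref{qtab} is what removes the need for $\gcd(m,n)=1$ in the Macdonald setting is exactly the point the paper's construction of $\varpi$ is designed to achieve.
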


\begin{proof}
From Proposition \ref{MDi0} $M_{\alpha,S}\mathcal{D}_{j}=0$ for $2\leq j\leq
N$ and $M_{\alpha,S}\mathcal{D}_{1}=C_{S,m}\left(  q,t\right)  M_{\widehat
{\alpha},S}$, where $\widehat{\alpha}=\left(  m-1,0,\ldots\right)  .$By
Propositions \ref{noCpair}, \ref{Mpoles1} and Lemma \ref{JMpoles2}
$M_{\alpha,S}$ and $M_{\widehat{\alpha},S}$ do not have poles at $\left(
q,t\right)  =\varpi$. Furthermore\linebreak$C_{S,m}\left(  \omega
u^{-n/g},u^{m/g}\right)  =0$ (due to the factor $1-q^{m}t^{c\left(
1,S\right)  -z}$, Proposition \ref{Cqtzeros}) and thus $M_{\alpha
,S}\mathcal{D}_{1}=0$ at $\left(  q,t\right)  =\varpi$.
\end{proof}

\subsection{Isotype of Singular Polynomials\label{Isotyp}}

The following discussion is in terms of Macdonald polynomials. It is
straightforward to deduce the analogous results for Jack polynomials. Suppose
$\sigma$ is a partition of $N$. A basis $\left\{  p_{S}:S\in\mathcal{Y}\left(
\sigma\right)  \right\}  $ of an $\mathcal{H}_{N}\left(  t\right)  $-invariant
subspace of $\mathcal{P}_{\tau}$ is called \textit{a basis of isotype}
$\sigma$ if each $p_{S}$ transforms under the action of $T_{i}$ defined in
Section \ref{RepTh} with $\sigma\left(  T_{i}\right)  $ replaced by $T_{i}$.
For example if $\mathrm{\operatorname{row}}\left(  i,S\right)
=\mathrm{\operatorname{row}}\left(  i+1,S\right)  $ then $p_{S}\left(
xs_{i}\right)  =p_{S}\left(  x\right)  $, equivalently $p_{S}T_{i}=tp_{S}$, or
if $\operatorname{col}\left(  i,S\right)  =\operatorname{col}\left(
i+1,S\right)  $ then $p_{S}T_{i}=-p_{S}$. There is a strong relation to
singular polynomials.

\begin{proposition}
: A polynomial $p\in\mathcal{P}_{\tau}$ is singular for a specific value of
$\left(  q,t\right)  =\psi$ if and only if $p\xi_{i}=p\phi_{i}$ for $1\leq
i\leq N$, evaluated at $\psi$.
\end{proposition}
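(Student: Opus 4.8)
The plan is to reduce the biconditional to a single operator identity, verified one index at a time. Write $A_i:=\phi_i-\xi_i$ as an operator on $\mathcal{P}_{\tau}$, where $\phi_i$ denotes the Jucys--Murphy element of (\ref{JMurph}) acting through the representation (\ref{defTi}). Then the condition $p\xi_i=p\phi_i$ for all $i$ is precisely $pA_i=0$ for all $i$, while singularity is $p\mathcal{D}_i=0$ for all $i$. I would therefore aim at the sharper per-index statement $pA_i=0\iff p\mathcal{D}_i=0$, which gives both directions of the proposition simultaneously.

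First I would record that all three families obey the same descent recursion $X_i=\frac1t T_i X_{i+1}T_i$. This is the definition for $\mathcal{D}_i$ and, by (\ref{JMurph}), for $\phi_i$; for $\xi_i$ it is a one-line check from $\xi_i=t^{i-N}T_{i-1}^{-1}\cdots T_1^{-1}wT_{N-1}\cdots T_i$, since conjugating $\xi_{i+1}$ by $T_i$ absorbs the trailing $T_{i+1}$ into $T_i$ and adjusts the power of $t$. Subtracting the $\phi$- and $\xi$-recursions yields $A_i=\frac1t T_i A_{i+1}T_i$, the same recursion. At the top index, the definition $\mathcal{D}_N=(1-\xi_N)/x_N$ together with $\phi_N=1$ gives the base identity $pA_N=p(1-\xi_N)=x_N\,(p\mathcal{D}_N)$; in operator form $A_N=\mathcal{D}_N M_N$, where $M_N$ is multiplication by $x_N$.

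Next I would propagate this downward. Using $A_{i+1}=\mathcal{D}_{i+1}M_{i+1}$ and inserting $T_iT_i^{-1}$ into $A_i=\frac1t T_i A_{i+1}T_i$, one regroups to obtain $A_i=\bigl(\frac1t T_i\mathcal{D}_{i+1}T_i\bigr)\bigl(T_i^{-1}M_{i+1}T_i\bigr)=\mathcal{D}_i M_i$ with $M_i:=T_i^{-1}M_{i+1}T_i$. Thus $M_i$ is the conjugate of multiplication by $x_N$ by the invertible operator $T_{N-1}\cdots T_i$ (each $T_j$ is invertible as soon as $t\neq0$, which holds at $\psi$ and is already implicit in the very definition of $\xi_i$), so $M_i$ is an injective linear operator on $\mathcal{P}_{\tau}=\mathcal{P}\otimes V_{\tau}$. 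Since $pA_i=(p\mathcal{D}_i)M_i$, injectivity of $M_i$ forces $pA_i=0\iff p\mathcal{D}_i=0$ for each $i$, and the proposition follows.

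The hard part will be purely bookkeeping rather than conceptual. One must respect the right-action convention $p(XY)=(pX)Y$ throughout, so that the multiplication operator sits on the correct side and conjugates to $M_i=T_i^{-1}M_{i+1}T_i$ (equivalently $T_iM_i=M_{i+1}T_i$) rather than to its inverse; and one must confirm that multiplication by $x_N$, hence each $M_i$, is injective on $\mathcal{P}\otimes V_{\tau}$, which is immediate since $\mathcal{P}$ is an integral domain and $V_{\tau}$ is a vector space over the ground field. Once the common recursion and the base identity $A_N=\mathcal{D}_N M_N$ are in place, the rest is a short induction.
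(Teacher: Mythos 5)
Your proof is correct and is essentially the paper's argument: both rest on the common descent recursion $X_i=\frac1t T_iX_{i+1}T_i$ shared by $\mathcal{D}_i$, $\phi_i$, $\xi_i$, the base case $\mathcal{D}_N=(1-\xi_N)/x_N$ with $\phi_N=1$, invertibility of the $T_i$, and a downward induction from $i=N$. Packaging this as the operator identity $\phi_i-\xi_i=\mathcal{D}_iM_i$ with $M_i$ injective is only a (slightly cleaner and more explicit) rewriting of the paper's chain of equivalences.
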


\begin{proof}
Recall the Jucys-Murphy elements $\left\{  \phi_{i}\right\}  $ from
(\ref{JMurph}). By definition $p\mathcal{D}_{N}=0$ if and only if $p\xi
_{N}=p=p\phi_{N}$. Proceeding by induction suppose that $p\mathcal{D}_{j}=0$
for $i<j\leq N$ if and only if $p\xi_{j}=p\phi_{j}$ for $i<j\leq N$. Suppose
\begin{align*}
0  &  =p\mathcal{D}_{i}=\frac{1}{t}pT_{i}\mathcal{D}_{i+1}T_{i}%
\Longleftrightarrow pT_{i}\mathcal{D}_{i+1}=0\Longleftrightarrow pT_{i}%
\xi_{i+1}=pT_{i}\phi_{i+1}\\
&  \Longleftrightarrow pT_{i}\xi_{i+1}T_{i}=pT_{i}\phi_{i+1}T_{i}%
\Longleftrightarrow tp\xi_{i}=tp\phi_{i}.
\end{align*}
This completes the proof.
\end{proof}

With $M_{\alpha,S}$ and $n$ as in Theorem \ref{mainMthm} the spectral vector
$\left[  \widetilde{\zeta}_{\alpha,S}\left(  i\right)  \right]  _{i=1}%
^{N}\allowbreak=\left[  q^{m}t^{c\left(  1,S\right)  },t^{c\left(  2,S\right)
},\ldots,t^{c\left(  N,S\right)  }\right]  $. Specialized to $\left(
q,t\right)  =\varpi$ the polynomial $M_{\alpha,S}$ is singular and
$q^{m}t^{c\left(  1,S\right)  }=t^{-n+c\left(  1,S\right)  }$. Recall $n=z$
for some $z\in\mathcal{Z}\left(  \widehat{\tau}\right)  $, and $z$ determines
a cell $\left(  i_{s},\widehat{\tau}_{i_{s}}+1\right)  $. In terms of Ferrers
diagrams let $\sigma=\widehat{\tau}\cup\left(  i_{s},\widehat{\tau}_{i_{s}%
}+1\right)  $, that is $\sigma_{i_{s}}=\tau_{i_{s}}+1$. Let $S^{\prime}$
denote the RSYT formed from the cells of $\tau$ containing the numbers
$2,\ldots,N$ and the cell $\left(  i_{s},\widehat{\tau}_{i_{s}}+1\right)  $
containing $1$. Then $c\left(  i,S^{\prime}\right)  =c\left(  i,S\right)  $
for $2\leq i\leq N$ and $c\left(  1,S^{\prime}\right)  =c\left(  1,S\right)
-n$. Thus the spectral vector of $M_{\alpha,S}$ evaluated at $\left(
q,t\right)  =\varpi$ is $\left[  t^{c\left(  i,S^{\prime}\right)  }\right]
_{i=1}^{N}$. This implies that $M_{\alpha,S}$ is (a basis element) of isotype
$\sigma$. The other elements of the basis corresponding to $\mathcal{Y}\left(
\sigma\right)  $ are obtained from $M_{\alpha,S}$ by appropriate
transformations using $\left\{  T_{i}\right\}  $.

\section{Concluding Remarks}

We have shown the existence of singular vector-valued Jack and Macdonald
polynomials for the easiest possible values of the label $\alpha$, that is,
$\left(  m,0,\ldots,0\right)  $. The proofs required some differentiation
formulas and combinatorial arguments involving Young tableaux. The singular
values were found to have an elegant interpretation in terms of where another
cell can be attached to an RSYT. It may occur that a larger set of parameter
values, say $\gcd\left(  m,n\right)  >1$, or even $\frac{m}{n}\notin
\mathbb{Z}$, still leads to singular Jack polynomials but our proof techniques
do not seem to cover these. One hopes that eventually a larger class of
examples (more general labels in $\mathbb{N}_{0}^{N}$) will be found, with a
target of a complete listing as is already known for the trivial
representation $\tau=\left(  N\right)  $. It is suggestive that the isotype
$\sigma$ of the singular polynomial $M_{\alpha,S}$ is obtained by a reasonably
natural transformation of the partition $\tau$.

\end{document}